\documentclass[a4paper]{elsarticle}
\usepackage{graphicx} 
\usepackage{amsthm}
\usepackage{amssymb}
\usepackage{multicol}
\usepackage{tikz}
\usepackage{amsmath}
\usepackage{color}
\usepackage{geometry}
\definecolor{E}{RGB}{255,84,0} 
\definecolor{2}{RGB}{41,199,92} 
\definecolor{Amarillo}{RGB}{225,191,73}
\definecolor{Celeste}{RGB}{117,170,219}
\definecolor{a}{RGB}{132,96,255} 

\definecolor{0}{RGB}{30,123,191} 
\definecolor{1}{RGB}{255,113,102} 
\definecolor{3}{RGB}{242,207,16} 
\definecolor{5}{RGB}{255,15,154} 
\definecolor{4}{rgb}{.8,0,.8}

\pgfdeclarelayer{background}
\pgfdeclarelayer{foreground}
\pgfsetlayers{background,main,foreground}

\usepackage[english]{babel}
\usepackage[utf8x]{inputenc}
\usepackage{amsmath}
\usepackage{amsfonts}
\usepackage{enumerate}
\usepackage{comment}
\usepackage{tkz-graph}
\usetikzlibrary{decorations.pathreplacing,decorations.pathmorphing,calc}
\usepackage{wrapfig}
\usepackage{hyperref}

\newtheorem{construction}{Construction}[section]
\newtheorem{theorem}[construction]{Theorem}

\newtheorem{corollary} [construction]{Corollary}

\newtheorem{definition} [construction]{Definition}

\newtheorem{lemma}[construction] {Lemma}

\newtheorem{Claim}{Claim}

\newcommand{\adrian}[1]{\textcolor{blue}{#1}}

\date{}

\begin{document}

\begin{frontmatter}

\title{On $R$-sequenceability of odd ordered groups}
\address[AP1]{Instituto de Matem\'atica Aplicada San Luis, Universidad Nacional de San Luis and CONICET, San Luis, Argentina.}
\address[AP2]{Departamento de Matem\'atica, Universidad Nacional de San Luis, San Luis, Argentina.}

\author[AP1,AP2]{Adri\'an Pastine}
\author[AP1,AP2]{Mar\'ia Valentina Soldera Ruiz}

\begin{abstract}
We study the $R$-sequenceability of finite groups of odd order. Building on the classical theory of $R^*$-sequences and orthomorphisms, we explore two tools: the notion of $R^{**}$-sequenceability, a strengthening of $R^*$-sequenceability tailored for inductive arguments over normal subgroups with cyclic quotients, and the \textit{odd cycle index} $\tau(G)$, which measures how many orthomorphisms are required to generate a full cycle together with an involution. Our main result is a Quotient-Normal Gadget theorem, which shows that if $G$ has a normal subgroup $N$ such that $G/N$ is $R^{**}$-sequenceable and $\tau(N) \leq |G/N| - 3$, then $G$ itself is $R^{**}$-sequenceable. We prove that $\tau(G) = 2$ for cyclic groups of order coprime with $3$, and establish an inductive bound $\tau(G) \leq \max\{\tau(N), \tau(G/N)\}$ for odd ordered groups with a normal subgroup $N$. As consequences, we show that every group whose order is coprime with $30$ is $R$-sequenceable, and that every nilpotent group whose order is coprime with $6$ and not a power of $5$ is $R$-sequenceable. These results extend prior work on abelian groups to broad families of non-abelian groups.
\end{abstract}
\begin{keyword}
$R$-Sequenceability \sep
Odd-ordered Groups \sep
Nilpotent Groups \sep
Orthomorphisms \sep
Terraces \sep
Group sequencing

\textit{MSC 2020:} 20D60, 05B15, 05C25, 20F16, 20F18
\end{keyword}
\end{frontmatter}
\section{Introduction}
An $n$-ordered group $G$ is said to be \textit{$R$-sequenceable} if there exists an ordering of its non-identity elements,
\[
g_1,g_2,\ldots,g_{n-1}
\]
such that the products
\begin{align*}
    g_1^{-1}g_2,&&g_2^{-1}g_3,&&\ldots&&g_{n-2}^{-1}g_{n-1},&&g_{n-1}^{-1}g_1
\end{align*}
are all distinct. In such a case, the list
\[
g_1,g_2,\ldots,g_{n-1}
\]
is called an \textit{$R$-sequence} for $G$. As an example,  
\[
r^3\tau,r^2\tau,\tau,r^3,r^2,r\tau,r
\] 
 is an $R$-sequence for the dihedral group on $8$ elements $D_4=\langle r,\tau \mid r^4=\tau^2, r\tau=\tau r^{-1}\rangle$. Notice that its sequence of products is
\[
r,r^2,r\tau,r^3,r^3\tau,\tau,r^2\tau.
\]

The concept of $R$-sequenceability was first introduced by Ringel in \cite{ringel1974cyclic} to study embeddings of the complete graph on orientable surfaces of some genus (depending on the order of the graph). It is related to several other sequencing problems, both for groups and for combinatorial structures. For a survey on known results for sequenceability problems for groups see the article by Ollis, \cite{ollis2012sequenceable}. For a survey on results for combinatorial structures see the article by Alspach \cite{alspach2019variations}.

Since the problem of $R$-sequenceability was first introduced, it has been characterized for abelian groups across several articles (see \cite{alspach2017friedlander,friedlander1978group,headley1994r,martin2015r,ollis2011twizzler,ollis2015constructions,wang2007more}). 
\begin{theorem}[\cite{alspach2017friedlander,friedlander1978group,headley1994r,martin2015r,ollis2011twizzler,ollis2015constructions,wang2007more}]\label{th: abelianos}
Let $n\geq 3$. An abelian group of order $n$ is $R$-sequenceable if and only if its $2$-Sylow subgroup is either trivial or non-cyclic.
\end{theorem}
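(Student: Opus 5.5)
This theorem collects results from several papers, so I would prove it by splitting into the two directions and reducing the hard direction to a few structural families.

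\textbf{Necessity.} Let $G$ be abelian of order $n$ and let $g_1,\ldots,g_{n-1}$ be an $R$-sequence. The quotients $g_i^{-1}g_{i+1}$, indices mod $n-1$, are $n-1$ distinct elements. Each is non-identity, since consecutive $g_i$ are distinct. Hence they are exactly the non-identity elements of $G$. Their product telescopes to the identity, because in an abelian group $\prod_i g_i^{-1}g_{i+1}=e$. So the product of all non-identity elements, which is the product of all elements of $G$, must be $e$.

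Now pair each element with its inverse. The product of all elements of an abelian group equals the product of its involutions. This is $e$ unless $G$ has exactly one involution, that is, unless the $2$-Sylow subgroup is cyclic and nontrivial. In that case the product is that involution. This gives the forbidden case.

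\textbf{Sufficiency.} We must construct an $R$-sequence when the $2$-Sylow subgroup $S_2$ is trivial or non-cyclic. Write $G\cong S_2\times H$ with $|H|$ odd. I would split into the following cases.

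\emph{Odd order.} For cyclic $\mathbb{Z}_n$, use Friedlander–Gordon–Miller type explicit constructions, built from directed terraces or from orthomorphisms $\theta$ with $x\mapsto \theta(x)-x$ bijective and $\theta$ acting as a single long cycle on the non-identity elements. For a general odd abelian group $\mathbb{Z}_{n_1}\times\cdots\times\mathbb{Z}_{n_k}$, lift $R^*$-sequences through direct products. The standard lemma here is that if $A$ is $R^*$-sequenceable and $B$ has a suitable orthomorphism, then $A\times B$ is $R^*$-sequenceable, and $R^*$ implies $R$. The base cases with factors of $3$, in particular $\mathbb{Z}_3$ and $\mathbb{Z}_3^k$, need separate ad hoc treatment; this is where Headley and Wang's work enters.

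\emph{$S_2$ non-cyclic.} Handle elementary abelian $2$-groups and $\mathbb{Z}_2\times\mathbb{Z}_{2^k}$-type groups via Ollis's twizzler constructions and Alspach et al. Then combine with the odd part using product constructions.

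\textbf{Main obstacle.} The hardest part is the $2$-groups whose $S_2$ is non-cyclic but has a large cyclic factor, such as $\mathbb{Z}_2\times\mathbb{Z}_{2^k}$ times an odd group. There the naive product lemmas fail, and one needs the carefully designed sequences of \cite{alspach2017friedlander,ollis2011twizzler}. I would first try to reduce every case to $\mathbb{Z}_2\times\mathbb{Z}_{2m}$ plus an odd $R^*$-sequenceable factor, and then cite those constructions.
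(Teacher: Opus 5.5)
Your necessity argument is correct and complete. The $n-1$ distinct quotients must be exactly $G\setminus\{e\}$, their product telescopes to $e$, and the product of all elements of a finite abelian group is $e$ unless there is a unique involution. The paper gives no proof of this theorem at all. It quotes it from the cited works, remarking only that the argument splits into odd and even order and uses $R^*$-sequences and orthomorphisms (Lemmas \ref{cyclic} and \ref{lem gadget viejo}). So deferring sufficiency to those citations puts you at the paper's level.

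The roadmap you attach to sufficiency, however, would not go through as written.

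The product step is not ``$A$ is $R^*$-sequenceable and $B$ has a suitable orthomorphism''. Lemma \ref{lem gadget viejo} needs orthomorphisms $f_1,\ldots,f_t$ of the odd factor $H$ such that $T_0f_1\cdots f_t$ is a single $|H|$-cycle and $m-t-3\ge 0$ is even. For odd $m$ this forces $t$ even, and then $H=\mathbb{Z}_3$ is impossible. Every orthomorphism of $\mathbb{Z}_3$ has the form $x\mapsto -x+c$, so $T_0f_1\cdots f_t$ has the form $x\mapsto -x+c'$ and therefore has a fixed point. Moreover, $\mathbb{Z}_3$ and $\mathbb{Z}_5$ are not $R^*$-sequenceable. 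Hence groups such as $\mathbb{Z}_3\times\mathbb{Z}_9$ or $\mathbb{Z}_5\times\mathbb{Z}_5$ are reached neither by your product scheme nor by your listed base cases. The $3$-parts are where \cite{alspach2017friedlander} needs its cycle-of-orthomorphisms results (e.g.\ $\tau(\mathbb{Z}_3^e)=2$ for $e\ge 2$). Headley's contribution, by contrast, concerns abelian $2$-groups, not the $3$-part.

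In the even case you only name elementary abelian and $\mathbb{Z}_2\times\mathbb{Z}_{2^k}$ Sylow subgroups. Something like $S_2\cong\mathbb{Z}_4\times\mathbb{Z}_4$ is neither. Your proposed reduction of every case to $\mathbb{Z}_2\times\mathbb{Z}_{2m}$ times an odd $R^*$-sequenceable factor cannot reach it, because the Sylow $2$-subgroup is an isomorphism invariant. Odd factors like $\mathbb{Z}_3$ and $\mathbb{Z}_5$ are not $R^*$-sequenceable anyway.

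So cite the sufficiency direction as the paper does, but do not present this case analysis as its proof.
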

With respect to non-abelian groups, only some particular families  have been studied (see \cite{bedford1987,cheng2000more,keedwell1983r}). The list of known results are summarized in the following.
\begin{theorem}
    \begin{enumerate}
        \item The dihedral group of order $2n$ is $R$-sequenceable if and only if $n$ is even (\cite{cheng2000more}).
        \item The dicyclic group $Q_{4n}$ is $R$-sequenceable if and only if $n$ is even and at least $4$ (\cite{keedwell1983r}).
        \item Non-abelian groups of order $pq$ where $p$ and $q$ are prime are $R$-sequenceable (\cite{cheng2000more,keedwell1983r}).
        \item The two non-abelian groups of order 27 are $R$-sequenceable (\cite{bedford1987}).
    \end{enumerate}
\end{theorem}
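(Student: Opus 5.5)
The statement collects results from \cite{cheng2000more,keedwell1983r,bedford1987}, and the intended proof is to invoke those references item by item. If I had to reconstruct the arguments, I would split each item into a necessity part (nonexistence) and a sufficiency part (explicit construction), treating items 1 and 2 in parallel because both families have a large cyclic subgroup of index $2$.

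For necessity I would reuse the obstruction behind Theorem~\ref{th: abelianos}. If $g_1,\dots,g_{m-1}$ is an $R$-sequence in a group of order $m$, the cyclic quotients $g_i^{-1}g_{i+1}$ telescope to the identity. They are $m-1$ distinct elements, so they are all of $G$ except one element $x$. Pushing this into an abelian quotient $\varphi\colon G\to A$ gives
\[
\sum_{h\neq x}\varphi(h)=0,\qquad\text{so}\qquad \varphi(x)=\sum_{h\in G}\varphi(h).
\]
The quotients also avoid the identity. This is because consecutive $g_i$ are distinct, except when $m-1=1$; for $m\ge 3$ that forces $x=e$. Hence $\sum_{h\in G}\varphi(h)=0$ in $A$.

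\begin{itemize}
\item For $Q_{4n}$ with $n$ odd, the abelianization is $\mathbb{Z}_4$. The fibres all have the same size $n$, so the sum is $n(0+1+2+3)=6n\equiv 2\pmod 4$. That is nonzero, a contradiction. This mirrors the cyclic-Sylow-$2$ case.
\item For $Q_8$ (that is, $n=2$) I would check nonexistence by a short exhaustive argument.
\item For $D_n$ with $n$ odd, the abelianization is $\mathbb{Z}_2$ and the sum is $n\equiv 1$. That is again nonzero, a contradiction.
\end{itemize}

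For sufficiency I would build sequences directly.

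\begin{itemize}
\item For $D_n$ with $n$ even, I would write the elements as $r^a$ and $r^a\tau$. I would interleave a known $R$-sequence-like ordering (a terrace) of $\mathbb{Z}_n$ on the rotations with a shifted copy on the reflections. The relation $\tau r^a=r^{-a}\tau$ turns each quotient into a rotation difference or a reflection indexed by a sum. I would then verify that the differences cover each nonzero residue once and the sums cover each residue once.
\item For $Q_{4n}$ with $n$ even and $n\ge 4$, I would use the same template with $\tau$ replaced by $j$, $j^2=r^n$. This adds a central twist to the bookkeeping.
\item For nonabelian groups of order $pq$, which is the case $\mathbb{Z}_q\rtimes\mathbb{Z}_p$ with $p\mid q-1$, I would use the orthomorphism/$R^*$ viewpoint. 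I would take an $R^*$-sequence of the cyclic normal subgroup and, for each coset of $\mathbb{Z}_q$, an orthomorphism of $\mathbb{Z}_q$ twisted by the action $x\mapsto\lambda x$. I would glue these along a cyclic ordering of the cosets.
\item For the two groups of order $27$, I would either carry out a computer search or glue three coset blocks over $\mathbb{Z}_3$.
\end{itemize}

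I expect the main obstacle to be the $pq$ gluing. The orthomorphisms chosen for the different cosets must be compatible at the coset transitions, so that the transition quotients are exactly the elements not already produced, with only the identity missing overall. My first attempt would choose all coset blocks as multiplier images $x\mapsto \lambda^k x$ of a single orthomorphism. I would then use the freedom in the starting element of each block to fix the transitions.
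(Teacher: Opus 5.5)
The paper gives no proof of this statement. It only collects results from \cite{cheng2000more,keedwell1983r,bedford1987}, so invoking those references item by item is exactly the paper's route. Your necessity reconstructions are correct. Pushing the telescoping product into an abelian quotient forces $\sum_{h\in G}\varphi(h)=0$. The fibre sums, $n\cdot 1\in\mathbb{Z}_2$ for the dihedral group and $n(0+1+2+3)\equiv 2 \pmod 4$ for $Q_{4n}$, are nonzero when $n$ is odd. This is the same obstruction behind the necessity half of Theorem~\ref{th: abelianos}.

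The substantive point you missed is that item 3, read literally, allows $p=2$. The nonabelian group $\mathbb{Z}_q\rtimes\mathbb{Z}_2$ of order $2q$ is the dihedral group of order $2q$ with $q$ odd (e.g.\ $S_3$). Item 1, and your own $\mathbb{Z}_2$-computation, show it is \emph{not} $R$-sequenceable. So item 3 holds only for odd primes $p,q$, and the cited results must be read that way. Your sketch for item 3 treats every $\mathbb{Z}_q\rtimes\mathbb{Z}_p$ with $p\mid q-1$ uniformly. As written, it tries to build an object your item-1 argument proves cannot exist. You must assume $p$ odd. Then $2p\mid q-1$, so $q\ge 7$, which is exactly what makes an $R^*$-sequence of $\mathbb{Z}_q$ available via Lemma~\ref{cyclic}.

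Beyond that, your sufficiency parts are plans rather than arguments. The real content, including the finite check that $Q_8$ fails, still rests on the citations. Three remarks on those plans:
\begin{itemize}
\item With rotations $r^a$ and reflections $r^b\tau$, a mixed quotient in either order is $r^{b-a}\tau$, a difference of exponents. Sums appear only if you write reflections as $\tau r^b$.
\item Counting type changes gives a quicker necessity proof for item 1. Reflection quotients arise exactly at the rotation/reflection switches around the cyclic sequence. There is an even number of such switches, but there must be $n$ of them.
\item For $p\ge 7$, the paper's own Theorem~\ref{g/nrdoublestar,tau(n)} already covers $\mathbb{Z}_q\rtimes\mathbb{Z}_p$. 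Take $N=\mathbb{Z}_q$; then $G/N=\mathbb{Z}_p$ is $R^{**}$-sequenceable by Corollary~\ref{cyclicr**}, and $\tau(\mathbb{Z}_q)=2\le p-3$ by Corollary~\ref{tau=2}. That construction puts the $R^{**}$-sequence on the quotient, the reverse of your plan to put an $R^*$-sequence on the normal subgroup $\mathbb{Z}_q$. The cases $p\in\{3,5\}$ fall outside the paper's gadget, since $\mathbb{Z}_p$ is not $R^*$-sequenceable there, and genuinely need the cited constructions.
\end{itemize}
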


The proof of Theorem \ref{th: abelianos} is divided in several cases, but the main divide is between even and odd ordered groups. In this article we  refine the construction used for odd order.
The ingredients for the proof of Theorem \ref{th: abelianos} include $R^*$-sequences and orthomorphisms. 
An \textit{$R^*$-sequence} is an $R$-sequence containing three consecutive elements, $g_1,g_2,g_3,$ such that $g_1g_3=g_2$. This concept, together with a first inductive construction, were first introduced in
\cite{friedlander1978group}, where the authors proved the following.
\begin{lemma}[\cite{friedlander1978group}]\label{cyclic} The cyclic group $Z_n$ is $R^*$-sequenceable for all
odd $n>5$.
\end{lemma}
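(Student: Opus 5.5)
We work additively in $Z_n$ with $n=2m+1$ odd. An $R$-sequence is then an ordering $g_1,\dots,g_{n-1}$ of the non-zero residues whose cyclic differences $g_{i+1}-g_i$ (indices mod $n-1$) are pairwise distinct. There are $n-1$ such differences, and none is $0$ because consecutive entries differ, so the differences must be exactly the non-zero residues, each used once. The $R^*$ condition asks in addition for three consecutive entries with $g_1+g_3=g_2$. Equivalently, $g_2-g_1=g_3$ and $g_3-g_2=-g_1$: the first difference equals the third element, and the second difference is the negative of the first element. We will give an explicit sequence, since the statement is purely about cyclic groups and no earlier inductive tool is available yet.

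\textbf{Step 1 (gadget first).} Choose a small rigid prefix realising the $R^*$ triple. The natural choice is $g_1=a$, $g_2=a+b$, $g_3=b$, whose differences are $b$ and $-a$. For instance $a=1$ and $b=-2$ give the prefix $1,\,-1,\,-2$ with differences $-2,\,-1$. This uses the elements $\{\pm1,-2\}$ and the differences $\{-1,-2\}$.

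\textbf{Step 2 (completion).} Extend the prefix by a zig-zag, so that the remaining elements are taken alternately from the ``low'' end $2,3,\dots$ and the ``high'' end $-3,-4,\dots$. The differences along a zig-zag have alternating sign and increasing or decreasing magnitude, exactly as in the classical construction of sequencings and harmonious orderings of $Z_n$.

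Simple zig-zags such as $1,-1,2,-2,\dots,m,m+1$ fail: their differences are $-2k$ for $1\le k\le m$ and $2k+1$ for $1\le k\le m-1$. Reducing mod $n$, the values $-2k$ become the odd residues $n-2,n-4,\dots,1$, and the values $2k+1$ are also odd, so the two families collide. So the zig-zag must be shifted or reversed part way through, in the spirit of Friedlander--Gordon--Miller. A single zig-zag cannot produce both even and odd residues as differences in each sign class. We therefore plan two zig-zag blocks:
\begin{itemize}
\item one block whose differences are the even residues $\pm 2k$ in a prescribed range;
\item one block whose differences are the remaining odd residues.
\end{itemize}
The blocks are joined by a single ``bridge'' step whose difference is the one value missing from both families. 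This forces a case split according to $n \bmod 4$, and possibly $n \bmod 8$, since the parity of $m$ decides which end each block finishes at.

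\textbf{Step 3 (verification).} For each residue class of $n$ we will check the following.
\begin{enumerate}
\item The listed elements are exactly $Z_n\setminus\{0\}$. This is done by exhibiting them as unions of arithmetic intervals.
\item The differences, including the wrap-around $g_1-g_{n-1}$, are exactly $Z_n\setminus\{0\}$. This is done by the same interval bookkeeping, reducing each family mod $n$.
\item The gadget triple sits inside the sequence.
\end{enumerate}
The hypothesis $n>5$ enters here: for $n\le 5$ the gadget elements $\{\pm1,-2\}$ and the bridge elements overlap. For $n=3$ and $n=5$ one also checks directly that no $R^*$-sequence exists.

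\textbf{Main obstacle.} The hard part is Step 2: designing the bridge and block lengths so that the wrap-around difference and the two gadget differences $-1,-2$ are not duplicated by the zig-zags. We would first fix small cases ($n=7,9,11,13$) by computer-style hand search. From these we would read off the pattern for each class of $n \bmod 4$, and only then prove the general interval bookkeeping.
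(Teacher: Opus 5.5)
\textbf{There is a genuine gap: no sequence is ever constructed.} The paper does not prove this lemma; it quotes it from Friedlander, Gordon and Miller. The lemma's whole content is the existence of an explicit sequence for every odd $n>5$, and your proposal never writes one down. Step~2 describes only the intended shape: two zig-zag blocks joined by a bridge. The following are all left open:
\begin{itemize}
\item the block lengths and the bridge element;
\item how the wrap-around difference $g_1-g_{n-1}$ is absorbed;
\item the case split modulo $4$ or $8$.
\end{itemize}
Your ``Main obstacle'' paragraph says the pattern is still to be read off from small cases. Step~3 is a checklist for a construction that has not been defined, so nothing has been verified. What is missing is exactly the explicit family of sequences, together with the interval bookkeeping showing that both the elements and the $n-1$ cyclic differences exhaust $Z_n\setminus\{0\}$.

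\textbf{The fixed gadget fails for $n=7$ and $n=9$.} The one concrete commitment you make, the triple $1,-1,-2$, cannot be completed to an $R$-sequence of $Z_7$. The successor $x\in\{2,3,4\}$ of $-2$ gives difference $x+2\in\{4,5,6\}$. Since $5=-2$ and $6=-1$ are already used by the gadget, $x=2$ with difference $4$. The predecessor $y\in\{2,3,4\}$ of $1$ gives difference $1-y\in\{6,5,4\}$, which forces $y=4$ and difference $4$ again, a contradiction. A similar short exhaustion shows it cannot be completed in $Z_9$ either: the four inner steps must be $+1,+2,+3$ and one of $-3,-4$, and every admissible pair of endpoints fails.

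So your explanation of where the hypothesis $n>5$ enters is incorrect. Any uniform family built on this gadget cannot cover $n=7,9$. It does extend for $n=11$, for example $1,10,9,2,4,7,3,8,5,6$ in $Z_{11}$. The cases $n=7,9$ need a different triple, for instance $1,3,2,5,6,4$ in $Z_7$ and $1,3,2,6,7,5,8,4$ in $Z_9$.

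Finally, checking that $Z_3$ and $Z_5$ have no $R^*$-sequence is harmless but unnecessary, since the lemma only asserts existence for $n>5$.
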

An \textit{orthomorphism} of a group $G$ is a permutation $f(x)$, such that $g(x)=x^{-1}f(x)$ is also a permutation. Notice in particular that an $R$-sequence can be seen as an orthomorphism by considering the permutation $f:G\rightarrow G$, with $f(g_0)=g_0$, $f(g_{|G|-1})=g_1$ and $f(g_i)=g_{i+1}$ for $1\leq i \leq |G|-2$. Further notice that in $\mathbb{Z}_{2n+1}$ the permutation $f(x)=-x$ is an orthomorphism as well. We say that a product of permutations of $G$ is a \textit{$|G|$-cycle} if when looked at in disjoint cycle notation it consists of one cycle of length $|G|$. In other words, if $\pi$ is the product of the permutations and $\pi^j(x)\neq \pi^k(x)$ for every $x\in G$ and for every $1\leq j<k\leq |G|$. The other ingredient for Theorem \ref{th: abelianos} is the following inductive construction.
\begin{lemma}[\cite{alspach2017friedlander}]\label{lem gadget viejo}Let $G$ be an $R^*$-sequenceable abelian group of order $m$, let  $H$ be an odd
order abelian group and let $T_0$ be the permutation of $H$ given by $T_0(x)=-x$. If there are orthomorphisms $f_1, f_2,\ldots, f_t$ of $H$ such that
$T_0f_1f_2\ldots f_t$ is an $|H|$-cycle and $m-t-3 \geq 0$ is even, then $G\oplus H$ is $R^*$-sequenceable.
\end{lemma}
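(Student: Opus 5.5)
The construction lifts an $R^*$-sequence of $G$ to $G\oplus H$ by replacing each element with a coset-sized block, and uses the orthomorphisms of $H$ to glue the blocks together.

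\textbf{Setup.} Let $g_1,\dots,g_{m-1}$ be an $R^*$-sequence of $G$, with quotients $q_i=g_{i+1}-g_i$ (indices mod $m-1$). The $q_i$ are the distinct non-zero elements of $G$. Let $g_a,g_{a+1},g_{a+2}$ be the special triple, so $g_a+g_{a+2}=g_{a+1}$. In additive notation this gives $q_{a+1}=-g_a$.

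\textbf{Approach.} We build an ordering of $(G\oplus H)\setminus\{(0,0)\}$ in three kinds of blocks.
\begin{itemize}
\item For each $i$, a block $(g_i,h)$ with $h$ running through $H$ in some order. Moving from block $i$ to block $i+1$ then produces differences $(q_i,\,\phi_i(h)-h)$.
\item The elements $(0,h)$ with $h\neq 0$. These are inserted using the special triple.
\end{itemize}
Concretely, the lift is encoded by permutations $\phi_i$ of $H$, where $(g_i,h)$ is followed in the cyclic sequence by $(g_{i+1},\phi_i(h))$. The sequence walks through the $G$-sequence $|H|$ times. Differences with first coordinate $q_i$ are all distinct exactly when $h\mapsto\phi_i(h)-h$ is a bijection, so we need each $\phi_i$ to be an orthomorphism (the identity is allowed only where differences $(q_i,0)$ are used once, as in the gadget). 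The whole walk is a single cycle precisely when the composite $\phi_{m-1}\cdots\phi_1$ (suitably modified at the special triple) is an $|H|$-cycle.

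\textbf{Choosing the $\phi_i$.}
\begin{enumerate}
\item Use $f_1,\dots,f_t$ on $t$ of the steps.
\item Use $T_0$ on one step. $T_0(h)=-h$ is an orthomorphism with $-h-h=-2h$ bijective since $|H|$ is odd.
\item Pair up the remaining $m-t-3$ steps (an even number) using pairs $(f,f^{-1})$, e.g.\ $T_0$ and $T_0^{-1}=T_0$. Each pair cancels in the product, so the composite is conjugate to $T_0f_1\cdots f_t$, an $|H|$-cycle. Since $H$ is abelian, $x\mapsto -x$ commutes suitably, and $T_0^{-1}$ is again an orthomorphism. This is why the parity and count hypothesis appears.
\item The remaining two steps, those adjacent to the special triple, are reserved for inserting the $H$-coset of $0$.
\end{enumerate}

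\textbf{Handling the zero coset.} Insert the elements $(0,h)$ near the triple. Using $g_a+g_{a+2}=g_{a+1}$, a detour $(g_a,h)\to(0,h')\to\dots$ can be arranged so that the differences $(q_a,\cdot)$ and $(q_{a+1},\cdot)=(-g_a,\cdot)$ are still each hit once, and the differences $(0,x)$ with $x\neq 0$ are hit exactly once. One must also check that the new sequence still contains an $R^*$-triple, for instance $(g_a,0),(g_{a+1},0),(g_{a+2},0)$ if the $h$-values there are chosen to be $0$. This ensures $G\oplus H$ is $R^*$-sequenceable and not merely $R$-sequenceable.

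\textbf{Main obstacle.} The bookkeeping at the special triple is the hard part. We must splice the $|H|-1$ elements $(0,h)$ into the cyclic walk, keep all differences distinct, keep the composite permutation a single cycle, and retain an $R^*$-triple. My first attempt is to perform the $T_0$ step across the triple, so that the symmetry $h\mapsto -h$ pairs the inserted zero-coset elements. I expect the "$3$" in $m-t-3$ to be accounted for by the two triple steps plus the $T_0$ step.
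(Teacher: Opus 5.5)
Your architecture is right and matches the gadget argument the paper uses to prove Theorem~\ref{g/nrdoublestar,tau(n)}, with the big group $G\oplus H$ and $N=0\oplus H$. You lift the $R^*$-sequence column by column with orthomorphisms of $H$, require the round-trip composite to be an $|H|$-cycle, pad with cancelling pairs $T_0T_0$, and splice the zero coset in at the special triple. But the splice is the whole content of the lemma, and you only assert that a suitable detour ``can be arranged''. The missing construction is as follows. Put $T_0$ on both triple steps and on the zero coset. The plain lift then contains the paths $(g_a,\pm h)\to(g_{a+1},\mp h)\to(g_{a+2},\pm h)$ and the $2$-cycles $(0,h)\leftrightarrow(0,-h)$. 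For each pair $\{h,-h\}$ with $h\neq 0$, replace these by
\[
(g_a,h)\to(0,-h)\to(0,h)\to(g_{a+2},-h),\qquad (g_a,-h)\to(g_{a+1},h)\to(g_{a+1},-h)\to(g_{a+2},h),
\]
and leave $(g_a,0)\to(g_{a+1},0)\to(g_{a+2},0)$ untouched. This uses both identities $q_{a+1}=-g_a$ and $q_a=g_{a+2}$; you only recorded the first. The new differences are $(q_{a+1},-2h),(0,2h),(q_a,-2h),(q_a,2h),(0,-2h),(q_{a+1},2h)$, exactly the six the plain lift used. So distinctness is preserved, every $(0,h)$ with $h\neq 0$ is visited, and the $R^*$-triple $(g_a,0),(g_{a+1},0),(g_{a+2},0)$ survives.

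Your step count is also off by one, and this matters. The cyclic $R^*$-sequence of $G$ has $m-1$ terms, hence $m-1$ transitions, but your allocation uses $t+1+(m-t-3)+2=m$. With a separate $T_0$ step plus two triple steps, only $m-t-4$ steps remain. That is an odd number, so they cannot be cancelled in pairs. The fix is exactly the feature of the rerouting above: it sends $(g_a,h)$ to $(g_{a+2},-h)$, so the two triple steps together act as $T_0$ rather than $T_0^2=\mathrm{id}$. The factor $T_0$ therefore comes from the gadget itself, and the $t$ maps $f_i$ plus $m-3-t$ padding maps fill the remaining $m-3$ transitions. The ``$3$'' is $1$ (from having $m-1$ transitions) plus $2$ (the triple steps), not the triple plus an extra $T_0$ step.
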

In this article we generalize the inductive construction to work instead for a (not necessarily abelian) normal subgroup and a cyclic quotient group. 
 This lets us, in particular, obtain results for nilpotent and solvable groups, thanks to their inductive nature. The main families of groups that we cover are summarized in the following.
In particular, we prove that any group whose order is coprime with $30$ is $R$-sequenceable,
and that any nilpotent group whose order is coprime with $6$ is $R$-sequenceable if in addition it is nilpotent and its order is not a power of $5$.

In Section \ref{sectionimportantres} we give an overview of our constructions, stating the results and 
the basic definitions needed to understand them. In Section \ref{sectiontautheorem}, we prove one of our main ingredients, which states that 
any group $G$ of order coprime with $6$ has an involution, $T_{(G)}$, and two more orthomorphisms, $f_1$ and $f_2$, 
such that $T_{(G)}f_1f_2$ is a $|G|$-cycle.
In Section \ref{section: graph} we present a way to visualize orthomorphisms and $R$-sequences as graphs ---a perspective that makes the gadget construction considerably more transparent---and give a first graphical introduction to our inductive construction. Then, in Section \ref{sectiong/nrdoublestar,tau(n)} we prove an inductive construction which uses
a special kind of $R$-sequence in $G/N$ together with orthomorphisms forming a cycle in $G/N$ to find
an $R$-sequence in $G$. Finally, in Section \ref{sec: conclusions} we give some closing remarks and offer 
possible future lines of work.

 \section{Preliminaries}\label{sectionimportantres}
Similar to Lemma \ref{lem gadget viejo}, our construction theorem will need two ingredients. The first is a 
particular kind of $R$-sequence, and the second is a list of orthomorphisms that form a cycle.
For the first ingredient, the conditions for $R^*$-sequence may not be enough to work inductively with $|G/N|$, 
as having $Ng_1Ng_3=Ng_3Ng_1$ may not guarantee the existence of representatives $g_1,g_3$ with
$g_1g_3=g_3g_1$. Fortunately, if the subgroup generated by $Ng_1,Ng_3$ is cyclic, there are
representatives $g_1,g_3$ such that the subgroup generated by them is cyclic, implying that $g_1$ and $g_3$ commute. As, when working with
solvable groups we can choose
 $G/N$ to be cyclic, this will be enough to obtain
our results.
This is why we introduce the following definition.
\begin{definition}\label{rdoublestar}
Let $G$ be a group. If $G$ has an $R^*$-sequence $g_1,\ldots,g_{n-1}$, with $g_i\neq e$ for $1\leq i \leq n-1$, and $g_1g_3=g_3g_1=g_2$,
such that the subgroup generated by $g_1,g_3$ is cyclic, then we say that $G$ is $R^{**}$-sequenceable, and $g_1,\ldots,g_{n-1}$ is an $R^{**}$-sequence for $G$.
\end{definition}

Notice that any $R^*$-sequenceable cyclic group is $R^{**}$-sequenceable.
Thus, Lemma \ref{cyclic} immediately yields the following.
\begin{corollary}\label{cyclicr**}
The cyclic group $Z_n$ is $R^{**}$-sequenceable for all
odd $n>5$.
\end{corollary}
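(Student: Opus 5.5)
The plan is to deduce Corollary~\ref{cyclicr**} directly from Lemma~\ref{cyclic}, by checking that every $R^*$-sequence of a cyclic group already meets the extra requirements of Definition~\ref{rdoublestar}.

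Fix an odd $n>5$. By Lemma~\ref{cyclic}, $Z_n$ has an $R^*$-sequence $g_1,\ldots,g_{n-1}$. By definition this is an $R$-sequence of the non-identity elements, so $g_i\neq e$ for $1\le i\le n-1$. It also contains three consecutive elements $h_1,h_2,h_3$ with $h_1h_3=h_2$.

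The first step is to move this triple to the front. The defining products $g_i^{-1}g_{i+1}$, together with the wrap-around product $g_{n-1}^{-1}g_1$, are invariant under cyclic rotation of the list. Any rotation of an $R$-sequence is therefore again an $R$-sequence, and we may assume $g_1=h_1$, $g_2=h_2$, $g_3=h_3$, so that $g_1g_3=g_2$.

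The second step is to verify the two remaining conditions:
\begin{itemize}
\item Since $Z_n$ is abelian, $g_1g_3=g_3g_1=g_2$.
\item The subgroup $\langle g_1,g_3\rangle$ is a subgroup of the cyclic group $Z_n$, and every subgroup of a cyclic group is cyclic.
\end{itemize}
Hence the rotated sequence is an $R^{**}$-sequence, and $Z_n$ is $R^{**}$-sequenceable.

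The only point needing any care is the rotation step: one must confirm that the cyclic closure $g_{n-1}^{-1}g_1$ is part of the definition, which it is, so rotation preserves the $R$-sequence property. Everything else is immediate.
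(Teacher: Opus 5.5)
Your proof is correct and matches the paper's argument, which simply notes that any $R^*$-sequenceable cyclic group is $R^{**}$-sequenceable (commutativity, and the fact that subgroups of cyclic groups are cyclic) and then invokes Lemma~\ref{cyclic}. Your rotation step only makes explicit the normalization, left implicit in the paper, that moves the $R^*$-triple into positions $1,2,3$; it is valid because the defining products include the wrap-around term $g_{n-1}^{-1}g_1$.
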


For the second ingredient of our construction, we present the following definition.
\begin{definition}
Given a group $G$, the \textit{odd cycle index} of $G$, $\tau(G)$, is the smallest even number $m$ such that there exist orthomorphisms
$\theta_{1},\ldots,\theta_m$ and an involution $T_{(G)}$, such that 
$\rho=T_{(G)}\theta_{1},\ldots,\theta_m$ is a $|G|$-cycle.
\end{definition}
In Section \ref{sectiontautheorem} we prove the following theorem inductively.
\begin{theorem}\label{tautheorem}
Let $G$ be a group of odd order. Let $N$ be a normal subgroup of $G$, and $G/N$ the quotient group. Then $\tau(G)\leq \max\{\tau(N),\tau(G/N)\}$.
\end{theorem}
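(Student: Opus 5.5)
The plan is to build the required permutations of $G$ directly, by lifting the data from $N$ and from $G/N$ through a fixed transversal. Let $m=\max\{\tau(N),\tau(G/N)\}$.

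\emph{Padding to length $m$.} If $\theta$ is an orthomorphism of a group, so is $\theta^{-1}$: substituting $x=\theta^{-1}(y)$ turns $x\mapsto x^{-1}\theta(x)$ into $y\mapsto \theta^{-1}(y)^{-1}y$, which is therefore bijective. In an odd order group, $x\mapsto x^2$ is an orthomorphism. Since the shorter list has even length and $m$ is even, I can append pairs $\theta,\theta^{-1}$ to it without changing the product. This gives an involution $T_Q$ and orthomorphisms $\phi_1,\dots,\phi_m$ of $Q=G/N$ with $\rho_Q=T_Q\phi_1\cdots\phi_m$ a $|Q|$-cycle. It also gives an involution $T_N$ and orthomorphisms $\psi_1,\dots,\psi_m$ of $N$ with $T_N\psi_1\cdots\psi_m$ an $|N|$-cycle.

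\emph{Lifting.} Fix a transversal $\{t_q\}_{q\in Q}$, so every element of $G$ is uniquely $n t_q$. Define $T(nt_q)=T_N(n)\,t_{T_Q(q)}$; this is an involution. For each $i$ and each $q$, pick an orthomorphism $\psi_{i,q}$ of $N$ and set $\Theta_i(nt_q)=\psi_{i,q}(n)\,t_{\phi_i(q)}$.

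\emph{Key verification.} Each $\Theta_i$ is an orthomorphism of $G$. We have $(nt_q)^{-1}\Theta_i(nt_q)=t_q^{-1}\bigl(n^{-1}\psi_{i,q}(n)\bigr)t_{\phi_i(q)}$, which lies in the coset $N q^{-1}\phi_i(q)$, so distinct $q$ give distinct cosets because $\phi_i$ is an orthomorphism. For fixed $q$, the map $n\mapsto t_q^{-1}(n^{-1}\psi_{i,q}(n))t_{\phi_i(q)}$ is injective because $\psi_{i,q}$ is an orthomorphism. Bijectivity of $\Theta_i$ itself is clear.

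The product $\rho=T\Theta_1\cdots\Theta_m$ preserves the coset structure and induces $\rho_Q$ on $Q$. Hence $\rho$ is a $|G|$-cycle if and only if the first-return map $\rho^{|Q|}$ restricted to one coset $Nt_{q_0}$ is an $|N|$-cycle on $N$.

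\emph{Choosing the $\psi_{i,q}$ (main obstacle).} Naively taking $\psi_{i,q}=\psi_i$ for all $q$ gives a direct product of an $|N|$-cycle and a $|Q|$-cycle, which fails unless the orders are coprime. So the $\psi_{i,q}$ must vary with the coset.

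Split the $\rho_Q$-orbit $q_0,\rho_Q(q_0),\dots$ into $|Q|$ \emph{steps}. Within the step starting at $q$, the factor $\Theta_m$ acts at $q$, then $\Theta_{m-1}$ acts at $\phi_m(q)$, and so on. For fixed $i$, the coset at which $\Theta_i$ acts is a bijective function of the step's starting coset, because the $\phi_j$ are permutations. Hence assigning $\psi_{i,\cdot}$ according to the starting coset of the step is well defined.

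On the step starting at $q_0$, use $\psi_1,\dots,\psi_m$; this step acts on $N$ as $T_N\psi_1\cdots\psi_m$. On every other step, use the alternating pattern $\theta,\theta^{-1},\dots,\theta,\theta^{-1}$ with $\theta(x)=x^2$; the product of these is the identity, so that step acts on $N$ as just $T_N$. There are $|Q|-1$ such steps, an even number, so consecutive $T_N$'s cancel. The return map on $Nt_{q_0}$ is then exactly $T_N\psi_1\cdots\psi_m$, an $|N|$-cycle, and $\rho$ is a $|G|$-cycle. This gives $\tau(G)\le m$.

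The delicate point I expect is the bookkeeping of the order of composition and of which coset each $\Theta_i$ is applied at. If the roles are shifted, the return map comes out as a cyclic rotation or conjugate of $T_N\psi_1\cdots\psi_m$, which is still an $|N|$-cycle, so I expect this to cause no real difficulty.
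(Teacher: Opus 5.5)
Your proof is correct and follows the paper's approach. You lift through a fixed transversal (as in Lemma \ref{a,alphaisortho}), use the $N$-cycle orthomorphisms only on the one pass through the quotient product that starts at a designated coset, and make every other pass act on $N$ as $T_{(N)}$; since $|G/N|-1$ is even, the first-return map is exactly the $|N|$-cycle. The differences are cosmetic: you pad and fill with $x\mapsto x^2$ and its inverse where the paper inserts copies of $T_{(N)}$, which spares you from needing $T_{(N)}$ to be an orthomorphism. Your step-start indexing also makes the well-definedness of the per-coset assignment more explicit than the paper's labelling of the $g_j$.
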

In \cite{alspach2017friedlander} it was shown that for any cyclic group $G$, with $\gcd(|G|,3)=1$,
$\tau(G)\leq 4$. Furthermore,  $\tau(\mathbb{Z}_3^{e})=2$ if $e\geq 2$.
We improve the first result to show that for any cyclic group, $G$, with 
$\gcd(|G|,3)=1$, $\tau(G)=2$.
These together with Theorem \ref{tautheorem} yields the following.
\begin{corollary}\label{tau=2}
Let $G$ be an odd ordered group.
If no group of the form $\mathbb{Z}_{3a}$, with $\gcd(a,3)=1$, is a normal subgroup or a quotient group of $G$, then $\tau(G)=2$. 
\end{corollary}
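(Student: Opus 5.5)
The plan is to argue by strong induction on $|G|$, using Theorem \ref{tautheorem} as the inductive step. The base cases are groups of prime order together with the facts recalled just before the corollary. Two preliminary observations are needed.

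First, $\tau(G)\geq 2$ for every nontrivial odd ordered group $G$. The number $\tau(G)$ is even by definition. It cannot be $0$, because then $T_{(G)}$ itself would be a $|G|$-cycle. But $T_{(G)}$ is an involution, so its cycles have length at most $2$, while $|G|>2$.

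Second, $\tau(G)=2$ for every odd group of prime order $p$. Then $G\cong\mathbb{Z}_p$. If $p\neq 3$, this is the improved cyclic result ($\tau(\mathbb{Z}_n)=2$ when $\gcd(n,3)=1$), which I take as available from Section \ref{sectiontautheorem}. If $p=3$, the hypothesis excludes it: $\mathbb{Z}_3=\mathbb{Z}_{3\cdot 1}$ with $\gcd(1,3)=1$, and $G$ is a normal subgroup of itself. More generally, the hypothesis with $N=G$ forbids $G\cong\mathbb{Z}_{3a}$ with $\gcd(a,3)=1$ outright.

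For the inductive step, let $G$ satisfy the hypothesis with $|G|$ composite. If $G$ has a proper nontrivial normal subgroup $N$, Theorem \ref{tautheorem} gives $\tau(G)\le\max\{\tau(N),\tau(G/N)\}$. The key point is that the hypothesis is inherited by both $N$ and $G/N$.
\begin{itemize}
\item A quotient of $G/N$ is a quotient of $G$.
\item A normal subgroup $M/N$ of $G/N$ corresponds to a normal subgroup $M$ of $G$ containing $N$. Here I must be careful, since $M/N$ is not a normal subgroup or quotient of $G$ in the literal sense.
\end{itemize}
Similarly, normal subgroups of $N$ need not be normal in $G$. This hereditary issue is the main obstacle.

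I would try to sidestep it by reading the hypothesis as a condition on sections, or by choosing the chain cleverly. Since $G$ has odd order, it is solvable by Feit--Thompson. So $G$ has a chief series whose factors are elementary abelian $\mathbb{Z}_p^{e}$. I would apply Theorem \ref{tautheorem} repeatedly along a composition or chief series to get $\tau(G)\le\max_i\tau(F_i)$ over the factors $F_i$, and then bound each factor. The cases for a factor are:
\begin{itemize}
\item $p\neq 3$: $\mathbb{Z}_p^{e}$ decomposes by repeated use of the theorem into copies of $\mathbb{Z}_p$, each with $\tau=2$.
\item $p=3$, $e\ge 2$: $\tau(\mathbb{Z}_3^{e})=2$ by the result of \cite{alspach2017friedlander}.
\item $p=3$, $e=1$: this is the only problematic factor, a single $\mathbb{Z}_3$.
\end{itemize}

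So the crux is choosing the series so that no $\mathbb{Z}_3$ factor appears alone. To do this, I would take $N$ to be a normal subgroup chosen so that any $3$-part is absorbed into a factor of the form $\mathbb{Z}_3^e$ with $e\ge2$, or into a cyclic factor $\mathbb{Z}_{3^k b}$ with $k\geq 2$. The latter is not of the excluded form $\mathbb{Z}_{3a}$ with $\gcd(a,3)=1$. I would then show, using the hypothesis, that such a choice always exists, arguing by contradiction: if every choice left a lone $\mathbb{Z}_3$, one could produce a normal subgroup or quotient isomorphic to $\mathbb{Z}_{3a}$. I expect this verification, and possibly the need for $\tau=2$ for cyclic groups $\mathbb{Z}_{9b}$-type factors, to be where the real work lies.

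Finally, combining the upper bound $\tau(G)\le 2$ with the lower bound $\tau(G)\ge 2$ gives $\tau(G)=2$.
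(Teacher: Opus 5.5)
Your overall plan is the one the paper intends. Its entire justification is the sentence that the two base cases ($\tau(\mathbb{Z}_n)=2$ for $3\nmid n$, and $\tau(\mathbb{Z}_3^e)=2$ for $e\ge 2$) ``together with Theorem~\ref{tautheorem}'' give the corollary. Your lower bound $\tau(G)\ge 2$ and your diagnosis that the hypothesis is not inherited are both correct. But the step you defer is the only one with real content, and the route you sketch for it fails. You propose a series of subgroups normal in $G$ (a chief series, or a well-chosen normal $N$) with no lone $\mathbb{Z}_3$ factor, and you plan to show by contradiction that otherwise $G$ has a normal subgroup or quotient $\mathbb{Z}_{3a}$. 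That implication is false, as the following example shows.

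Let $E=3^{1+6}_+$ be extraspecial of exponent $3$. Since $7\mid|\mathrm{Sp}_6(3)|$ and $3$ has order $6$ modulo $7$, $E$ has an automorphism of order $7$ that centralizes $Z(E)$ and acts irreducibly on $E/Z(E)\cong\mathbb{F}_3^6$. Put $Q=E\rtimes\mathbb{Z}_7$. Every nontrivial normal subgroup of $Q$ contains $Z(E)$. Hence an irreducible summand of $\mathbb{F}_5Q$ on which $Z(E)$ acts nontrivially is a faithful irreducible module $V$. Put $G=V\rtimes Q$, a group of odd order.
\begin{itemize}
\item Since $C_G(V)=V$, $V$ is the unique minimal normal subgroup of $G$, and the normal subgroups of $G$ are exactly $1<V<VZ(E)<VE<G$.
\item $VZ(E)\cong V\rtimes\mathbb{Z}_3$ is nonabelian, and the quotients of $G$ are $G$, $Q$, $\mathbb{F}_3^6\rtimes\mathbb{Z}_7$, $\mathbb{Z}_7$ and $1$. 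So $G$ satisfies the hypothesis.
\item Yet the unique chief series has the lone factor $VZ(E)/V\cong\mathbb{Z}_3$.
\item More generally, every series of normal subgroups of $G$ has a factor that is $\mathbb{Z}_3$, or $V\rtimes\mathbb{Z}_3$ (Sylow $3$-subgroup of order $3$), or has Sylow $3$-subgroup equal to the nonabelian group $E$. None of these is a base case.
\end{itemize}

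In this example $\tau(G)\le 2$ still holds, but only via the subnormal chain $1\trianglelefteq V\trianglelefteq VA\trianglelefteq VE\trianglelefteq G$. Here $A\cong\mathbb{Z}_3^4$ is the preimage of a maximal isotropic subspace of $E/Z(E)$; it is normal in $E$ but not in $G$. The factors are $V\cong\mathbb{Z}_5^d$, $\mathbb{Z}_3^4$, $\mathbb{Z}_3^3$ and $\mathbb{Z}_7$. The sections one must pass through, such as $E$, themselves violate the corollary's hypothesis, so a strong induction on $|G|$ that carries the hypothesis cannot reach them. Reading the hypothesis ``on sections'' is not an option either: every group of order divisible by $3$ has a subgroup of order $3$, so the corollary would collapse to the case $3\nmid|G|$.

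What your approach actually needs is three things.
\begin{enumerate}
\item Observe that the hypothesis is equivalent to $G$ having no normal subgroup of order $3$ and no normal subgroup of index $3$. A cyclic normal $\mathbb{Z}_{3a}$ contains a characteristic $\mathbb{Z}_3$, and $\mathbb{Z}_{3a}$ maps onto $\mathbb{Z}_3$.
\item Apply Theorem~\ref{tautheorem} along arbitrary subnormal chains, not just chains of subgroups normal in $G$.
\item Prove the purely group-theoretic lemma that every odd-order group with these two properties has a subnormal chain whose factors are all cyclic of order prime to $3$ or $\mathbb{Z}_3^e$ with $e\ge 2$.
\end{enumerate}
Your fallback to cyclic factors $\mathbb{Z}_{3^k b}$ would additionally need $\tau(\mathbb{Z}_{3^k})=2$, which is not among the base cases the corollary is derived from. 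The lemma in the third step is the real content of the statement. Your proposal does not supply it, and the paper's one-line derivation does not spell it out either.
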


Having presented our ingredients, we can present our main construction theorem (which we prove in Section \ref{sectiong/nrdoublestar,tau(n)}).
\begin{theorem}[Quotient-Normal Gadget]\label{g/nrdoublestar,tau(n)}
Let $G$ be an odd ordered group and let $N$ be a normal subgroup. If $G/N$ is $R^{**}$-Sequenceable and $\tau(N)\leq |G/N|-3$, then $G$ is $R^{**}$-Sequenceable.
\end{theorem}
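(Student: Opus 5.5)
We generalize the Friedlander--Gordon--Miller / Alspach et al.\ gadget of Lemma \ref{lem gadget viejo}, replacing the direct sum by a coset decomposition. Let $m=|G/N|$ and fix an $R^{**}$-sequence $Nx_1,\dots,Nx_{m-1}$ of $G/N$ with $Nx_1Nx_3=Nx_3Nx_1=Nx_2$ and $\langle Nx_1,Nx_3\rangle$ cyclic. Since this subgroup is cyclic, we pick $c\in G$ with $Nc$ generating it and choose representatives $x_1,x_3$ as powers of $c$ with $x_2=x_1x_3$. Then $x_1,x_2,x_3$ pairwise commute and generate a cyclic group. The remaining representatives $x_i$ we choose arbitrarily, with $x_0=e$ for the coset $N$ itself.

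The candidate sequence for $G$ lists the elements of each coset $x_iN$ (and the nonidentity elements of $N$) in blocks following the order of the quotient sequence. Each block is traversed along an orbit of a suitable permutation of $N$, with the quotient differences providing ``jumps'' between cosets. Concretely, we encode the sequence as an orthomorphism-type permutation $\Phi$ of $G$: on the coset $x_iN$ it acts as $x_iy\mapsto x_{i+1}\phi_i(y)$ for a permutation $\phi_i$ of $N$ (twisted by conjugation by the representatives). The condition that all successive quotients $g^{-1}\Phi(g)$ are distinct then reduces, coset by coset, to the requirement that each $\phi_i$ (after the twist) be an orthomorphism of $N$. The condition that $\Phi$ is a single cycle on $G\setminus\{e\}$ reduces to the requirement that the composite $\phi_{m-1}\cdots\phi_1$ (the first-return map to a coset) be an $|N|$-cycle, adjusted for the removed identity.

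Write $t=\tau(N)$ and take orthomorphisms $\theta_1,\dots,\theta_t$ and an involution $T=T_{(N)}$ with $T\theta_1\cdots\theta_t$ an $|N|$-cycle. We assign $\theta_1,\dots,\theta_t$ to $t$ of the coset transitions. Each remaining transition, of which there are $m-1-t-2\ge 0$, we fill in pairs by $\psi,\psi^{-1}$ with $\psi$ an orthomorphism (for instance $y\mapsto y$-twists). The count is even because $m$ is odd and $t$ is even, and the paired maps cancel in the composite. The involution $T$ is produced by the three special elements $x_1,x_2,x_3$: using $x_1x_3=x_2$ and commutativity, we reverse the traversal inside the corresponding cosets, mirroring the $T_0(x)=-x$ trick of Lemma \ref{lem gadget viejo}, where $T(y)$ plays the role of $-y$. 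This is where the hypothesis $\tau(N)\le m-3$ enters: three transitions are consumed by the $R^*$ triple. The output sequence again contains three consecutive elements $g_1,g_2,g_3$ inside $\langle c\rangle$ with $g_1g_3=g_2$, coming from the $R^{**}$ triple with identity-coset choices. Hence $G$ is $R^{**}$-sequenceable.

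The main obstacle is non-commutativity. The differences $(x_iy)^{-1}x_{i+1}y'=y^{-1}(x_i^{-1}x_{i+1})^{y'}\cdots$ involve conjugation, so we must verify that orthomorphism conditions on $N$ survive the conjugation twists by $x_i$. The natural fix is to replace $\theta_j$ by $\alpha\circ\theta_j\circ\alpha^{-1}$ for inner automorphisms $\alpha$ induced by representatives. This uses normality of $N$, and automorphic images of orthomorphisms are again orthomorphisms, so the cycle structure is preserved under conjugation. The delicate part is checking that the involution step at $x_1,x_2,x_3$ genuinely produces $T$ without stray conjugations, and this is exactly where we need $x_1,x_3$ to commute and lie in a cyclic group. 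We would first carry out the bookkeeping explicitly in the case $m=5$ (once the triple trick is fixed the general case only adds pairs), then generalize.
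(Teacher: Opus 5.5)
Your skeleton matches the paper's. You lift the quotient sequence through orthomorphisms of $N$, pad the $m-3$ ordinary transitions so that they realize $\theta_1,\dots,\theta_t$, and take $g_1,g_3$ as powers of one element with $g_2=g_1g_3=g_3g_1$. For padding, the paper uses pairs $T_{(N)},T_{(N)}$; the identity is not an orthomorphism. The conjugation twists you worry about in the lift come only from writing elements as $x_iy$. If you write them as $ng_i$ and set $ng_i\mapsto\alpha_i(n)g_{i+1}$, you get an orthomorphism with no twisting (Lemma \ref{a,alphaisortho}).

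The genuine gap is the step that carries the theorem. You never say how the nonidentity elements of $N$ are threaded into the cycle, and your coset-by-coset reduction does not cover that step. In the paper, write $T=T_{(N)}$ and take a pair $\{n,T(n)\}$ with $n\neq e$. The lifted paths $(ng_1,T(n)g_2,ng_3)$ and $(T(n)g_1,ng_2,T(n)g_3)$, together with the $2$-cycle $(n,T(n))$ inside $N$, are replaced by the paths $(ng_1,T(n),n,T(n)g_3)$ and $(T(n)g_1,ng_2,T(n)g_2,ng_3)$. The new differences still lie in the cosets $N$, $Ng_3$, $Ng_1^{-1}$; this is what $g_1g_3=g_3g_1=g_2$ buys. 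However, each of these cosets is now shared by two different families of arcs, so distinctness is no longer automatic.

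Put $\psi(n)=n^{-1}T(n)$, so that $\psi(T(n))=\psi(n)^{-1}$. The differences of the rerouted arcs are:
\begin{itemize}
\item in $Ng_3$: $\psi(n)g_3$ and $g_1^{-1}\psi(T(n))g_1\,g_3$;
\item in $Ng_1^{-1}$: $g_1^{-1}\psi(n)$ and $g_1^{-1}g_3^{-1}\psi(T(n))g_3$;
\item on the vertical arcs: $\psi(T(n))$ and $g_2^{-1}\psi(n)g_2$.
\end{itemize}
Commutativity of $g_1,g_3$ does not remove these conjugations, because $g_1,g_2,g_3$ need not centralize $N$. So your claim that the delicate step is settled by $x_1,x_3$ commuting is wrong.

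What is actually needed is an orientation of the pairs. You need a set $R$ with $R\cap T(R)=\emptyset$ and $R\cup T(R)=N\setminus\{e\}$ such that $\psi(R)$ is invariant under conjugation by $g_1,g_2,g_3$. Equivalently, $\psi(R)$ contains exactly one of $d,d^{-1}$ for each $d\in N\setminus\{e\}$ and is a union of orbits of conjugation by $\langle g_1,g_3\rangle$; without this, the $Ng_3$ differences collide. The paper builds $R$ as a union of orbits $C(n)$ of the action $n\mapsto\psi^{-1}(g\psi(n)g^{-1})$ of $G$, choosing one orbit from each pair $\{C(n),C(T(n))\}$. This works only because $C(n)\neq C(T(n))$, i.e.\ $\psi(n)$ is never conjugate to its inverse. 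That is Claim \ref{gng}: in a group of odd order no nonidentity element is conjugate to its inverse. This is the essential use of odd order, and nothing in your proposal plays its role.

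Your planned test case $m=5$ is vacuous, since $\mathbb{Z}_5$ is not $R^{*}$-sequenceable; the first real case is $m=7$.
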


Applying Theorem \ref{g/nrdoublestar,tau(n)} together with Corollary \ref{cyclicr**} and Corollary \ref{tau=2} we can
find $R^{**}$-sequences for groups of order coprime with $3$ yielding the following result. Here we need
to exclude groups of order multiple of $3$, because of the $\tau$ condition (on either side of the inequality), and groups of order multiple of $5$ both because we cannot have $G/N\cong \mathbb{Z}_5$. 
\begin{corollary}
Let $G$ be an odd ordered group, with $\gcd(|G|,3)=1$. If $G$ has a normal subgroup
of prime index $p>5$, then $G$ is $R^{**}$-sequenceable.
In particular any group whose order is coprime with $30$ is $R^{**}$-sequenceable.
\end{corollary}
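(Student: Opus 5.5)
The corollary follows by combining Theorem \ref{g/nrdoublestar,tau(n)} with Corollary \ref{cyclicr**} and Corollary \ref{tau=2}; the only work is checking hypotheses. Let $G$ be of odd order with $\gcd(|G|,3)=1$, and let $N\trianglelefteq G$ have prime index $p>5$. Then $G/N\cong\mathbb{Z}_p$, and $p$ is odd and greater than $5$. By Corollary \ref{cyclicr**}, $G/N$ is $R^{**}$-sequenceable.

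Next we need $\tau(N)\le p-3$. Since $|N|$ divides $|G|$, the order $|N|$ is odd and coprime with $3$. Hence no normal subgroup or quotient of $N$ can be of the form $\mathbb{Z}_{3a}$, since its order would be divisible by $3$. So Corollary \ref{tau=2} gives $\tau(N)=2$.

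Two edge cases need attention here. If $N$ is trivial, $G\cong\mathbb{Z}_p$ is handled directly by Corollary \ref{cyclicr**}. Otherwise we should make sure Corollary \ref{tau=2} is meant for nontrivial $N$, which it is, being built from the cyclic case and Theorem \ref{tautheorem}. Since $p\ge 7$, we get $p-3\ge 4\ge 2=\tau(N)$, so Theorem \ref{g/nrdoublestar,tau(n)} applies and $G$ is $R^{**}$-sequenceable.

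For the second statement, let $\gcd(|G|,30)=1$ and argue by the existence of a suitable normal subgroup. A group of odd order is solvable by Feit--Thompson. Any nontrivial solvable group has a maximal normal subgroup $N$ with $G/N$ simple and solvable, hence cyclic of prime order $p$. Here $p$ divides $|G|$, so $p\notin\{2,3,5\}$, i.e. $p>5$. The first part then applies.

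The trivial group should be excluded or treated as degenerate. The only step needing real input is the existence of a prime-index normal subgroup, which rests on Feit--Thompson; there is no genuine obstacle beyond citing it correctly.
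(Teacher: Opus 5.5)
Your proposal is correct and follows the paper's route: Corollary \ref{cyclicr**} gives an $R^{**}$-sequence for $G/N\cong\mathbb{Z}_p$, Corollary \ref{tau=2} gives $\tau(N)=2\le p-3$, and Theorem \ref{g/nrdoublestar,tau(n)} finishes the argument. For the second part you make explicit the solvability (via Feit--Thompson) that the paper leaves implicit when it says a cyclic prime-order quotient can be chosen, and you are right to exclude the trivial group.
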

Recall that a nilpotent group is the direct product of its sylow subgroups. Thus, in such a case, we can choose the multiple of $5$ part of the group to be in $N$, yielding the following.
\begin{corollary}
If $G$ is a nilpotent group whose order is coprime with 6  then $G$ is $R^{**}$-sequenceable except possibly if $|G|=5^k$. 
\end{corollary}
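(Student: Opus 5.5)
The plan is to reduce the statement to the preceding corollary, that is, to Theorem \ref{g/nrdoublestar,tau(n)} applied with a cyclic quotient of prime order $p>5$. The only real work is to produce a normal subgroup $N$ of $G$ of prime index $p>5$, keeping the Sylow $5$-subgroup inside $N$.

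First, since $G$ is nilpotent, write $G=P_5\times P_{p_1}\times\cdots\times P_{p_r}$ as the direct product of its Sylow subgroups. Here $P_5$ is possibly trivial and $p_1,\ldots,p_r$ are the primes dividing $|G|$ other than $5$. Because $\gcd(|G|,6)=1$, every $p_i\geq 7$. Because $|G|$ is not a power of $5$, we have $r\geq 1$; we may also assume $|G|>1$, as the trivial group is excluded by $|G|$ not being $5^k$ with $k=0$. Fix $p=p_1$. The nontrivial $p$-group $P_p$ has a normal subgroup $K$ of index $p$, by the standard fact that a maximal subgroup of a finite $p$-group is normal of index $p$. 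Set
\[
N=P_5\times K\times P_{p_2}\times\cdots\times P_{p_r}.
\]
Then $N$ is normal in $G$ and $G/N\cong P_p/K\cong\mathbb{Z}_p$.

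Next I check the two hypotheses of Theorem \ref{g/nrdoublestar,tau(n)}. Since $p\geq 7$ is odd and greater than $5$, Corollary \ref{cyclicr**} gives that $G/N\cong\mathbb{Z}_p$ is $R^{**}$-sequenceable. For the index condition, $|N|$ is coprime with $3$, so no group of the form $\mathbb{Z}_{3a}$ can be a normal subgroup or a quotient of $N$. Hence Corollary \ref{tau=2} gives $\tau(N)=2\leq 4\leq p-3$. Theorem \ref{g/nrdoublestar,tau(n)} then yields that $G$ is $R^{**}$-sequenceable. Equivalently, one simply invokes the previous corollary, since $G$ has odd order coprime with $3$ and a normal subgroup of prime index $p>5$.

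The only delicate point is the degenerate case $N=\{e\}$, namely $G\cong\mathbb{Z}_p$. There the meaning of $\tau$ of the trivial group (an even $m$, possibly $0$, with the identity involution giving a $1$-cycle) should be checked. To avoid relying on that convention, I would handle this case directly: $G\cong\mathbb{Z}_p$ with $p\geq 7$ is $R^{**}$-sequenceable by Corollary \ref{cyclicr**}. Beyond this, the main obstacle is only making sure that the factor $5$ is placed in $N$ rather than in the quotient. This is exactly why the prime $p\neq 5$ is chosen for the quotient, and it is why powers of $5$ must be excluded.
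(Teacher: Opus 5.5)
Your proposal is correct and follows the paper's argument. The paper likewise uses the decomposition of the nilpotent group into its Sylow subgroups to place the $5$-part inside $N$, and then invokes the preceding corollary, i.e.\ Theorem \ref{g/nrdoublestar,tau(n)} with $G/N\cong\mathbb{Z}_p$, $p\geq 7$, together with Corollaries \ref{cyclicr**} and \ref{tau=2}; you simply make explicit what the paper leaves implicit, namely the index-$p$ normal subgroup of $P_p$ and the degenerate case $G\cong\mathbb{Z}_p$.
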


\section{Proof of Theorem \ref{tautheorem}}\label{sectiontautheorem}
We start this section  by showing how to obtain an orthomorphism of $G$ using orthomorphisms of $N$ and of $G/N$.
This type of orthomorphisms will be used to prove Theorem \ref{tautheorem}.

In order to work with $G/N$ and $N$, we need to choose representatives of the elements of $G/N$.
Let $Ng_0,\ldots,Ng_{|G/N|-1}$ be the elements of $G/N$, and 
$g_0,\ldots,g_{|G/N|-1}\in G$ be a choice of representatives.

We first show how to build an orthomorphism from one orthomorphism of $G/N$ and several orthomorphisms of $N$.

Let $A'(Nx)$ be an orthomorphism of $G/N$ and $A(x)$ be the permutation of $\{g_0,\ldots,g_{|G/N|-1}\}$ obtained from $A'(Nx)$,
i.e.
\[
A(g_i)=g_j\text{ if and only if } A'(Ng_i)=Ng_j
\]

For $0\leq i\leq {|G/N|-1}$  let $\alpha_i$ be an orthomorphism of $N$, and let 
$\alpha=(\alpha_0,\ldots,\alpha_{|G/N|-1})$. 
For $x\in G$ write $x=ng_i$, with $n\in N$ and $g_i\in\{g_0,\ldots,g_{|G/N|-1}\}$.
We define the \textbf{$A$-lift through $\alpha$, $[A,\alpha]$} by $[A,\alpha](x)=\alpha_i(n)A(g_i)$. Notice that 
$[A,\alpha]$ may depend on the choice of representatives, $g_i\in\{g_0,\ldots,g_{|G/N|-1}\}$, which is why
we choose them before hand.
 We want to see that $[A,\alpha]$ is an orthomorphism.
This means that $x_i^{-1}[A,\alpha](x_i)\neq x_j^{-1}[A,\alpha](x_j)$ whenever $x_i\neq x_j$.
Assume then
\begin{align*}
x_i^{-1}[A,\alpha](x_i)=&x_j^{-1}[A,\alpha](x_j).\\
\end{align*}
We start by writing $x_i=n_ig_i$ and $x_j=n_jg_j$, hence 
\begin{align*}
(n_ig_i)^{-1}[A,\alpha](n_ig_i)=&(n_jg_j)^{-1}[A,\alpha](n_jg_j).\\
\end{align*}

We will first show that the equality implies $g_i=g_j$, and then that it implies $n_i=n_j$.
Notice that
\begin{align*}
(n_ig_i)^{-1}\alpha_i(n_i)A(g_i)&\in Ng_i^{-1}A(g_i)=Ng_i^{-1}A'(Ng_i),\\
(n_jg_j)^{-1}\alpha_i(n_j)A(g_j)&\in Ng_j^{-1}A(g_j)=Ng_j^{-1}A'(Ng_j).\\
\end{align*}
Thus $Ng_i^{-1}A'(Ng_i)=Ng_j^{-1}A'(Ng_j)$. But, as $A'$ is an orthomorphism, this implies $Ng_i=Ng_j$
and $g_i=g_j$.

We can now write, without loss of generality, $x_i=n_ig_1$, $x_j=n_jg_1$, and
\[
(n_ig_1)^{-1}[A,\alpha](n_ig_1)=(n_jg_1)^{-1}[A,\alpha](n_jg_1). 
\]
We want to show $n_i=n_j$. We start by developing the powers and splitting $[A,\alpha]$,
\begin{align*}
(n_ig_1)^{-1}[A,\alpha](n_ig_1)=&(n_jg_1)^{-1}[A,\alpha](n_jg_1),\\
g_1^{-1}n_i^{-1}\alpha_1(n_i)A(g_1)=&g_1^{-1}n_j^{-1}\alpha_1(n_j)A(g_1).\\ 
\end{align*}
Canceling we get $n_i^{-1}\alpha_1(n_i)=n_j^{-1}\alpha_1(n_j)$. But as $\alpha_1$ is an orthomorphism
we get $n_i=n_j$. Therefore $x_i=x_j$. We proved the following.
\begin{lemma}\label{a,alphaisortho}
$[A,\alpha](x)$ is an orthomorphism of $G$.
\end{lemma}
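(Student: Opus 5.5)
To show that $[A,\alpha]$ is an orthomorphism of $G$, two things must be checked: that $[A,\alpha]$ is a permutation of $G$, and that $x\mapsto x^{-1}[A,\alpha](x)$ is a permutation of $G$. Since $G$ is finite, both reduce to injectivity. The text preceding the statement establishes the second, so the plan is to record that argument and add the first.

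\emph{$[A,\alpha]$ is a bijection.} Write every $x\in G$ uniquely as $x=ng_i$ with $n\in N$ and $g_i$ one of the chosen representatives. This works because $G=\bigsqcup_i Ng_i$, and $n=xg_i^{-1}$ is determined once the coset $Ng_i$ is fixed. Suppose $\alpha_i(n_i)A(g_i)=\alpha_j(n_j)A(g_j)$. Then the two elements lie in the same coset, so $NA(g_i)=NA(g_j)$, that is, $A'(Ng_i)=A'(Ng_j)$. Because $A'$ is a permutation of $G/N$, we get $Ng_i=Ng_j$, hence $i=j$. Cancelling $A(g_i)$ on the right gives $\alpha_i(n_i)=\alpha_i(n_j)$, and since $\alpha_i$ is a permutation of $N$, $n_i=n_j$. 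So $[A,\alpha]$ is injective, hence bijective.

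\emph{$x\mapsto x^{-1}[A,\alpha](x)$ is injective.} Use normality of $N$ to put $x^{-1}[A,\alpha](x)=g_i^{-1}n_i^{-1}\alpha_i(n_i)A(g_i)$ in the coset $Ng_i^{-1}A(g_i)=(Ng_i)^{-1}A'(Ng_i)$; normality is what makes $N$ commute past $g_i^{-1}$ here. If two such values agree, they give the same coset $(Ng)^{-1}A'(Ng)$. Since $A'$ is an orthomorphism of $G/N$, the map $Ng\mapsto (Ng)^{-1}A'(Ng)$ is injective, so the two elements share the same representative $g_i$. Cancelling $g_i^{-1}$ on the left and $A(g_i)$ on the right leaves $n_i^{-1}\alpha_i(n_i)=n_j^{-1}\alpha_i(n_j)$, and the orthomorphism property of $\alpha_i$ gives $n_i=n_j$.

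The main obstacle is bookkeeping rather than depth. One must make sure the same index $i$ is used for $\alpha_i$ on both sides; the displayed computation in the text writes $\alpha_i$ where $\alpha_j$ is meant, which becomes harmless once $i=j$ is established. One must also keep the representatives fixed throughout, so that decomposing $x$ as $ng_i$ is well defined and $A$ sends representatives to representatives.
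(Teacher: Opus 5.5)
Your proof is correct and follows the paper's argument: you compare cosets and use that $A'$ is an orthomorphism of $G/N$ to force the same representative $g_i$, then cancel and apply the orthomorphism property of $\alpha_i$. Your explicit check that $[A,\alpha]$ is itself a bijection supplies a step that the paper's preceding discussion leaves implicit.
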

\begin{proof}
The result follows from the discussion preceding it.
\end{proof}
We are ready to present the proof of Theorem \ref{tautheorem}.

\begin{proof}[Proof of Theorem \ref{tautheorem}]
Notice that if $\rho=T_{(G)}\theta_1\ldots\theta_m$ is a $|G|$-cycle, then $\rho'=T_{(G)}T_{(G)}T_{(G)}\theta_1\ldots\theta_m$ 
is a $|G|$-cycle as well, since $T_{(G)}^2 = \mathrm{id}$ and inserting two copies of $T_{(G)}$ leaves the composition unchanged.

Let $m=\max\{\tau(N),\tau(G/N)\}$, $\rho_{G/N}=T^{(G/N)}_0A'^{(1)}\ldots,A'^{(m)}$ be a set of orthomorphism of $G/N$ 
generating a $|G/N|$-cycle. Let $A^{(i)}$ be the corresponding permutations of the representatives of $G/N$,
with $A^{(0)}$ corresponding to $T_{(G/N)}$.
 Notice that a list of orthomorphisms of the form 
\[
[A^{(0)},\alpha^{(0)}][A^{(1)},\alpha^{(1)}]\ldots[A^{(m)},\alpha^{(m)}]
\]
will cycle through all the different cosets,
i.e. it generates a $|G/N|$-cycle in $G/N$.
We are going to make it cycle through all the elements of $N$ the same time so that it is a $|G|$-cycle in $G$.
The choices of $\alpha^{(j)}=(\alpha_0^{(j)},\ldots,\alpha_{|G/N|-1}^{(j)})$ will have to be done carefully, as having
all the $\alpha_i^{(j)}$ be copies of the same orthomorphism may cause troubles when $|G/N|$ and $|N|$ are not coprime.
Instead we will use only one of the $\tau(N)$ orthomorphisms used to generate the cycle in $N$ for each $\alpha^{(j)}$. 

Label the elements of $G/N$ as follows,
 $Ng_0=Ne$, let $Ng_i=A_i(Ng_{i-1})$. Let $g_i$ be the chosen representative of $Ng_i$ in the group $G$.

Let $\rho_{N}=\phi^{(0)}\ldots\phi^{(m)}$ be a set of orthomorphisms of $N$ generating an $|N|$-cycle, with $\phi^{(0)}=T_{(N)}$. Let
\[
\alpha_i^{(j)}=\left\lbrace\begin{array}{lcr}
\phi^{(j)} & \text{ if } & i=j,\\
T^{(N)}_0 &  & \text{otherwise},\\
\end{array}\right. 
\]
and let $\alpha^{(j)}=(\alpha_0^{(j)},\ldots,\alpha_{|G/N|-1}^{(j)})$. 

Notice that we may have $m<|G/N|$,
which means that there will not be an $\alpha_i^{(i)}$ for every $i\in \{0,\ldots,|G/N|-1\}$. This is not an issue.

Then $\rho_{G}=[A^{(0)},\alpha^{(0)}]\ldots[A^{(m)},\alpha^{(m)}]$ generates a $|G|$-cycle.
Indeed, if we apply $\rho_G$ to an element of the form $ng_1$ we have $\rho_G(ng_1)=\rho_N(n)g_2$.
When we apply $\rho_G$ to an element of the form $ng_i$, with $i\neq 1$ we get 
$\rho_G(ng_i)=\left(T_{(N)}\right)^{m+1}(n)\rho_{G/N}(g_i)=T_{(N)}(n)\rho_{G/N}(g_i)$, 
because at no point in the composition of $\rho_G$ 
are we doing $[A^{(j)},\alpha^{(j)}](n'g_j)$ (we get $g_j$ at step $j$ of the composition if and only
if we had $g_1$ at step $1$).

Consider now the cycle that starts at $e$. The first time we apply $\rho_G$ we get $\rho_G(e)=\rho_N(e)g_2$.
When we apply it $i$ times, $1\leq i \leq |G/N|$, we get $\rho_G^i(e)=\left(T_{(N)}\right)^{i-1}(\rho_N(e))g_{i+1}$.
Thus we reach the coset $Ng_1=Ne$ again only after applying $\rho_G^{|G/N|}$. But 
$\left(T_{(N)}\right)^{|G/N|-1}(\rho_N(e))=\rho_N(e)$ because $|G/N|-1$ is even and $T_{(N)}$ is an involution.
Thus to reach $e$ again we have to apply $\left(\rho_G^{|G/N|}\right)^{|N|}$, which means that the cycle generated
has $|G|$ elements.

We have thus proved Theorem \ref{tautheorem}.
\end{proof}
\section{A slight detour to graphs}\label{section: graph}
In this section we present a way to think about orthomorphisms and $R$-sequences in graphs, and give a first graphical introduction to the construction that we formally introduce and prove in Section \ref{sectiong/nrdoublestar,tau(n)}.

It is  helpful to visualize  a permutation of a group, $f:G\rightarrow G$, as a directed graph with vertex set $G$, and with arcs of the form $(x,f(x))$ (or $x\rightarrow f(x)$). Notice that this may include loops when $x=f(x)$, and that each connected component of such a graph is a directed cycle. We think of these arcs as being colored or labelled with $x^{-1}f(x)$. When seen in this manner, a permutation is an orthomorphism if and only if in its corresponding directed graph any two arcs have different labels, see Figure \ref{Fig:Ort-Z5} for an example. Furthermore, a permutation is an $R$-sequence if and only if it is an orthomorphism and its corresponding directed graph consists of two connected components, see Figure \ref{Fig:R-sec D4} for an example. One of these components consists of a  loop and the other one of a directed cycle of length $|G|-1$.

\begin{figure}
\begin{center}
\tikz{\foreach \i in {0,1,...,4}
{
\node[circle,minimum size=.05,scale=.8] at ($(90-72*\i:1.5)$) (x\i) {$\i$};
}

\draw (x0) [->,line width=1.2,color=4]
  to[out=60,in=120,looseness=8] (x0);
\draw (x2)  [->,line width=1.2,bend right=12,color=0] to (x3);
\draw (x4)  [->,line width=1.2,bend right=12,color=1] to (x1);
\draw (x1)  [->,line width=1.2,bend right=12,color=2] to (x4);
\draw (x3)  [->,line width=1.2,bend right=12,color=3] to (x2);
}
\end{center}
\caption{The orthomorphism $f(x)=-x$ on the group $\mathbb{Z}_5$.}\label{Fig:Ort-Z5}
\end{figure}

Consider a group $G$ with a normal subgroup $N$, and let
$g_0, g_1,\ldots, g_{|G/N|-1}$ be coset representatives, such that $Ng_0=N$, and that
\[
Ng_1,Ng_2,\ldots,Ng_{|G/N|-1}
\]
is an $R^{**}$-sequence for $G/N$.
Further,  assume that the permutations $T_{(N)},\theta_1,\ldots,\theta_{|G/N|-3}$ form an $|N|$-cycle for $N$. 
Let $\alpha_i=T_{(N)}$ for $0\leq i \leq 2$, and $\alpha_i=\theta_{i-2}$ for $3\leq i \leq |G/N|-1$, be permutations of $N$.
To label the vertices of the graph with the elements of the group \(G\), we proceed as follows. We arrange the elements of each coset in columns, where each row corresponds to an element of the normal subgroup \(N\). The columns are ordered according to the \(R^{**}\)-sequence of \(G/N\), beginning with \(g_1\) (the first element appearing in the \(R^*\)-sequence condition).
The arcs are determined by the orthomorphism \([A,\alpha]\), i.e., there are arcs of the form $\big(x,[A,\alpha](x)\big)$ for each $x\in G$. Notice that from columns $3$ to $|G/N|-1$ we have the orthomorphisms generating a cycle.

Next, between the columns corresponding to \(g_1\) and \(g_2\), we insert the column associated with \(g_0\), see Figure \ref{Fig:Rseq sin gadget}. Using this new column, we rearrange the previously defined arcs and add new ones generated by the elements of \(Ng_0\), excluding the identity element, see Figure \ref{Fig:Rseq con gadget}. This construction yields an involution from the column \(g_1\) to the column \(g_3\). In this way, traversing the entire graph yields the desired cycle, which is equivalent to obtaining an \(R\)-sequence of \(G\).

\begin{figure}
\begin{center}
\begin{tikzpicture}[line width=1, scale=0.7] 
size=.2cm,shape=circle, text=black]
\draw (6,-0.3) node [circle] (e) {$e$};%
\draw (8.5,-1.3) node [circle] (r) {$r$};%
\draw (9.5,-3.35) node [circle] (r2) {$r^2$};%
\draw (8.5,-5.6) node [circle] (r3) {$r^3$};%
\draw (6,-6.7) node [circle] (t) {$\tau$};%
\draw (3.8,-5.6) node [circle] (r3t) {$r^3\tau$};
\draw (3,-3.35) node [circle] (r2t) {$r^2\tau$};%
\draw (3.8,-1.3) node [circle] (rt) {$r\tau$};%

\draw [->,line width=1.8, color=E, bend right=-15,line width=1.3] (r) to (r3t);%
\draw [->,line width=1.8, color=Celeste, bend right=-15,line width=1.3] (r3t) to (r2t);%
\draw [->,line width=1.8, color=pink, bend right=-18,line width=1.3] (r2t) to (t);%
\draw [->,line width=1.8, color=blue, bend right=23,line width=1.3] (t) to (r3);%
\draw [->,line width=1.8, color=2, bend right=20,line width=1.3] (r3) to (r2);%
\draw [->,line width=1.8, color=3,bend right=-11,line width=1.3] (r2) to (rt);%
\draw [->,line width=1.8, color=a, bend right=-5,line width=1.3] (rt) to (r);%
\end{tikzpicture}
\end{center}
\caption{The $R$-sequence $r^3\tau,r^2\tau,\tau,r^3,r^2,r\tau,r$ with products {$\color{blue}r\tau$,
{$\color{2}r^3$,  
{$\color{3}r^3\tau$,
{$\color{a}\tau$, 
{$\color{E}r^2\tau$, 
{$\color{Celeste}r,$
{$\color{pink}r^2$
}}}}}}} for the group $D_4$.}\label{Fig:R-sec D4}
\end{figure}

\begin{figure}
\begin{center}
\tikz[scale=0.8]{
\node[above=20pt,fill=none,draw=none,shape=rectangle,scale=1.2] at (6.75,5) {{\color{white}$G/N$}};
\node[above=20pt,fill=none,draw=none,shape=rectangle] at (0,4) {$Ng_1$};
\node[above=20pt,fill=none,draw=none,shape=rectangle] at (1.5,4) {$Ng_0$};
\node[above=20pt,fill=none,draw=none,shape=rectangle] at (3.1,4) {$Ng_2$};
\node[above=20pt,fill=none,draw=none,shape=rectangle] at (4.6,4) {$Ng_3$};
\node[above=20pt,fill=none,draw=none,shape=rectangle] at (6.1,4) {$Ng_4$};
\node[above=20pt,fill=none,draw=none,shape=rectangle] at (7.6,4) {$Ng_5$};
\node[above=20pt,fill=none,draw=none,shape=rectangle] at (9.1,4) {$Ng_6$};
\node[above=20pt,fill=none,draw=none,shape=rectangle] at (10.6,4) {$Ng_1$};
\node[above=20pt,fill=none,draw=none,shape=rectangle] at (14,3) {};
\node[right=20pt,fill=none,draw=none,shape=rectangle] at (14,3) {};
\node[left=20pt,fill=none,draw=none,shape=rectangle] at (-2,-1) {};
\node[below=20pt,fill=none,draw=none,shape=rectangle] at (-2,-1) {};

\node[above=20pt,fill=none,draw=none,shape=rectangle,scale=1.2] at (5,5.3) {$G/N$};

\foreach \i in {0,1,...,4}{
\node[left=20pt,fill=none,draw=none,shape=rectangle] at (0,\i) {$\i$};
}
\node[left=20pt,fill=none,draw=none,shape=rectangle,scale=1.2] at (-1,2) {$N$};

\node[below=15pt,fill=none,shape=rectangle,draw=none,scale=1] at (1.5,-.3) {$T_{(N)}$};
\draw [decorate,decoration={brace,amplitude=10pt},xshift=0pt,yshift=0pt,-]
(3,-.3) -- (0,-.3)node [fill=none,draw=none,black,midway,xshift=9pt]{};

\node[below=15pt,fill=none,shape=rectangle,draw=none,scale=1] at (3.7,-.3) {$T_{(N)}$};
\draw [decorate,decoration={brace,amplitude=10pt},xshift=0pt,yshift=0pt,-]
(4.5,-.3) -- (3,-.3)node [fill=none,draw=none,black,midway,xshift=9pt]{};

\node[below=15pt,fill=none,shape=rectangle,draw=none,scale=1] at (5.2,-.3) {$\theta_1$};
\draw [decorate,decoration={brace,amplitude=10pt},xshift=0pt,yshift=0pt,-]
(6,-.3) -- (4.5,-.3)node [fill=none,draw=none,black,midway,xshift=9pt]{};

\node[below=15pt,fill=none,shape=rectangle,draw=none,scale=1] at (6.7,-.3) {$\theta_2$};
\draw [decorate,decoration={brace,amplitude=10pt},xshift=0pt,yshift=0pt,-]
(7.5,-.3) -- (6,-.3)node [fill=none,draw=none,black,midway,xshift=9pt]{};

\node[below=15pt,fill=none,shape=rectangle,draw=none,scale=1] at (8.2,-.3) {$\theta_3$};
\draw [decorate,decoration={brace,amplitude=10pt},xshift=0pt,yshift=0pt,-]
(9,-.3) -- (7.5,-.3)node [fill=none,draw=none,black,midway,xshift=9pt]{};

\node[below=15pt,fill=none,shape=rectangle,draw=none,scale=1] at (9.7,-.3) {$\theta_4$};
\draw [decorate,decoration={brace,amplitude=10pt},xshift=0pt,yshift=0pt,-]
(10.5,-.3) -- (9,-.3)node [fill=none,draw=none,black,midway,xshift=9pt]{};

\foreach \i in {0,1,...,7}{
\foreach \j in {0,1,...,4}{
\node[shape=circle,minimum size=.05,fill,scale=.3] at (1.5*\i,\j) (x\i\j) {};
}}

\begin{pgfonlayer}{background}
\node[shape=circle,fill=a!50,minimum size=1cm,scale=.3] at (x00) (y00) {};
\node[shape=circle,fill=2!50,minimum size=1cm,scale=.3] at (x01) (y00) {};
\node[shape=circle,fill=Celeste!50,minimum size=1cm,scale=.3] at (x02) (y00) {};
\node[shape=circle,fill=3!50,minimum size=1cm,scale=.3] at (x03) (y00) {};
\node[shape=circle,fill=E!50,minimum size=1cm,scale=.3] at (x04) (y00) {};

\node[shape=circle,fill=a!50,minimum size=1cm,scale=.3] at (x70) (y70) {};
\node[shape=circle,fill=2!50,minimum size=1cm,scale=.3] at (x71) (y70) {};
\node[shape=circle,fill=Celeste!50,minimum size=1cm,scale=.3] at (x72) (y70) {};
\node[shape=circle,fill=3!50,minimum size=1cm,scale=.3] at (x73) (y70) {};
\node[shape=circle,fill=E!50,minimum size=1cm,scale=.3] at (x74) (y70) {};

\end{pgfonlayer}

\draw (x11) [->,dashed,color=E,line width=1.2,bend right=40] to (x14);
\draw (x12) [->,dashed,color=3,line width=1.2,bend right=60] to (x13);
\draw (x14) [->,dashed,color=2,line width=1.2,bend right=40] to (x11);
\draw (x13) [->,dashed,color=Celeste,line width=1.2,bend right=60] to (x12);

\draw (x04) [->,color=E,line width=1.2] to (x21);
\draw (x01) [->,color=2,line width=1.2] to (x24);
\draw (x02) [->,color=Celeste,line width=1.2] to (x23);
\draw (x03) [->,color=3,line width=1.2] to (x22);
\draw (x00) [->,color=a,bend right=20,line width=1.2] to (x20);
\draw (x24) [->,color=2,line width=1.2] to (x31);
\draw (x23) [->,color=Celeste,line width=1.2] to (x32);
\draw (x22) [->,color=3,line width=1.2] to (x33);
\draw (x21) [->,color=E,line width=1.2] to (x34);
\draw (x20) [->,color=a,line width=1.2] to (x30);

\draw (x30) [color=a,line width=1.2,->] to (x40);
\draw (x34) [color=E,line width=1.2,->] to (x41);
\draw (x31) [color=2,line width=1.2,->] to (x44);
\draw (x33) [color=3,line width=1.2,->] to (x42);
\draw (x32) [color=Celeste,line width=1.2,->] to (x43);

\draw (x40) [color=a,line width=1.2,->] to (x50);
\draw (x41) [color=E,line width=1.2,->] to (x54);
\draw (x44) [color=2,line width=1.2,->] to (x51);
\draw (x42) [color=3,line width=1.2,->] to (x53);
\draw (x43) [color=Celeste,line width=1.2,->] to (x52);

\draw (x50) [color=a,line width=1.2,->] to (x60);
\draw (x54) [color=E,line width=1.2,->] to (x62);
\draw (x51) [color=2,line width=1.2,->] to (x63);
\draw (x53) [color=3,line width=1.2,->] to (x64);
\draw (x52) [color=Celeste,line width=1.2,->] to (x61);

\draw (x60) [color=a,line width=1.2,->] to (x71);
\draw (x61) [color=Celeste,line width=1.2,->] to (x74);
\draw (x62) [color=E,line width=1.2,->] to (x72);
\draw (x63) [color=2,line width=1.2,->] to (x70);
\draw (x64) [color=3,line width=1.2,->] to (x73);}
\end{center}
\caption{An orthomorphism on $G$ formed through several orthomorphisms of $N$ and an $R$-sequence of $G/N$.}\label{Fig:Rseq sin gadget}
\end{figure}

\begin{figure}
\begin{center}
\tikz[scale=0.8]{
\node[above=20pt,fill=none,draw=none,shape=rectangle,scale=1.2] at (6.75,5) {{\color{white}$G/N$}};
\node[above=20pt,fill=none,draw=none,shape=rectangle] at (0,4) {$Ng_1$};
\node[above=20pt,fill=none,draw=none,shape=rectangle] at (1.5,4) {$Ng_0$};
\node[above=20pt,fill=none,draw=none,shape=rectangle] at (3.1,4) {$Ng_2$};
\node[above=20pt,fill=none,draw=none,shape=rectangle] at (4.6,4) {$Ng_3$};
\node[above=20pt,fill=none,draw=none,shape=rectangle] at (6.1,4) {$Ng_4$};
\node[above=20pt,fill=none,draw=none,shape=rectangle] at (7.6,4) {$Ng_5$};
\node[above=20pt,fill=none,draw=none,shape=rectangle] at (9.1,4) {$Ng_6$};
\node[above=20pt,fill=none,draw=none,shape=rectangle] at (10.6,4) {$Ng_1$};
\node[above=20pt,fill=none,draw=none,shape=rectangle] at (14,3) {};
\node[right=20pt,fill=none,draw=none,shape=rectangle] at (14,3) {};
\node[left=20pt,fill=none,draw=none,shape=rectangle] at (-2,-1) {};
\node[below=20pt,fill=none,draw=none,shape=rectangle] at (-2,-1) {};

\node[above=20pt,fill=none,draw=none,shape=rectangle,scale=1.2] at (5,5.3) {$G/N$};

\foreach \i in {0,1,...,4}{
\node[left=20pt,fill=none,draw=none,shape=rectangle] at (0,\i) {$\i$};
}
\node[left=20pt,fill=none,draw=none,shape=rectangle,scale=1.2] at (-1,2) {$N$};

\node[below=15pt,fill=none,shape=rectangle,draw=none,scale=1] at (2.1,-.3) {$T_{(N)}$};
\draw [decorate,decoration={brace,amplitude=10pt},xshift=0pt,yshift=0pt,-]
(4.5,-.3) -- (0,-.3)node [fill=none,draw=none,black,midway,xshift=9pt]{};

\node[below=15pt,fill=none,shape=rectangle,draw=none,scale=1] at (5.2,-.3) {$\theta_1$};
\draw [decorate,decoration={brace,amplitude=10pt},xshift=0pt,yshift=0pt,-]
(6,-.3) -- (4.5,-.3)node [fill=none,draw=none,black,midway,xshift=9pt]{};

\node[below=15pt,fill=none,shape=rectangle,draw=none,scale=1] at (6.7,-.3) {$\theta_2$};
\draw [decorate,decoration={brace,amplitude=10pt},xshift=0pt,yshift=0pt,-]
(7.5,-.3) -- (6,-.3)node [fill=none,draw=none,black,midway,xshift=9pt]{};

\node[below=15pt,fill=none,shape=rectangle,draw=none,scale=1] at (8.2,-.3) {$\theta_3$};
\draw [decorate,decoration={brace,amplitude=10pt},xshift=0pt,yshift=0pt,-]
(9,-.3) -- (7.5,-.3)node [fill=none,draw=none,black,midway,xshift=9pt]{};

\node[below=15pt,fill=none,shape=rectangle,draw=none,scale=1] at (9.7,-.3) {$\theta_4$};
\draw [decorate,decoration={brace,amplitude=10pt},xshift=0pt,yshift=0pt,-]
(10.5,-.3) -- (9,-.3)node [fill=none,draw=none,black,midway,xshift=9pt]{};

\foreach \i in {0,1,...,7}{
\foreach \j in {0,1,...,4}{
\node[shape=circle,minimum size=.05,fill,scale=.3] at (1.5*\i,\j) (x\i\j) {};
}}

\begin{pgfonlayer}{background}
\node[shape=circle,fill=a!50,minimum size=1cm,scale=.3] at (x00) (y00) {};
\node[shape=circle,fill=2!50,minimum size=1cm,scale=.3] at (x01) (y00) {};
\node[shape=circle,fill=Celeste!50,minimum size=1cm,scale=.3] at (x02) (y00) {};
\node[shape=circle,fill=3!50,minimum size=1cm,scale=.3] at (x03) (y00) {};
\node[shape=circle,fill=E!50,minimum size=1cm,scale=.3] at (x04) (y00) {};

\node[shape=circle,fill=a!50,minimum size=1cm,scale=.3] at (x70) (y70) {};
\node[shape=circle,fill=2!50,minimum size=1cm,scale=.3] at (x71) (y70) {};
\node[shape=circle,fill=Celeste!50,minimum size=1cm,scale=.3] at (x72) (y70) {};
\node[shape=circle,fill=3!50,minimum size=1cm,scale=.3] at (x73) (y70) {};
\node[shape=circle,fill=E!50,minimum size=1cm,scale=.3] at (x74) (y70) {};
\end{pgfonlayer}

\draw (x00) [color=a,line width=1.2,->,bend right=20] to (x20);
\draw (x20) [color=a,line width=1.2,->] to (x30);

\draw (x01) [color=2,line width=1.2,->] to (x24);
\draw (x24) [color=2,line width=1.2,->,bend right=20] to (x21);
\draw (x21) [color=2,line width=1.2,->] to (x34);

\draw (x04) [color=E,line width=1.2,->] to (x11);
\draw (x11) [color=E,line width=1.2,->,bend right=20] to (x14);
\draw (x14) [color=E,line width=1.2,->] to (x31);

\draw (x02) [color=Celeste,line width=1.2,->] to (x23);
\draw (x23) [color=Celeste,line width=1.2,->,bend right=20] to (x22);
\draw (x22) [color=Celeste,line width=1.2,->] to (x33);

\draw (x03) [color=3,line width=1.2,->] to (x12);
\draw (x12) [color=3,line width=1.2,->,bend right=20] to (x13);
\draw (x13) [color=3,line width=1.2,->] to (x32);

\draw (x30) [color=a,line width=1.2,->] to (x40);
\draw (x34) [color=2,line width=1.2,->] to (x41);
\draw (x31) [color=E,line width=1.2,->] to (x44);
\draw (x33) [color=Celeste,line width=1.2,->] to (x42);
\draw (x32) [color=3,line width=1.2,->] to (x43);

\draw (x40) [color=a,line width=1.2,->] to (x50);
\draw (x41) [color=2,line width=1.2,->] to (x54);
\draw (x44) [color=E,line width=1.2,->] to (x51);
\draw (x42) [color=Celeste,line width=1.2,->] to (x53);
\draw (x43) [color=3,line width=1.2,->] to (x52);

\draw (x50) [color=a,line width=1.2,->] to (x60);
\draw (x54) [color=2,line width=1.2,->] to (x62);
\draw (x51) [color=E,line width=1.2,->] to (x63);
\draw (x53) [color=Celeste,line width=1.2,->] to (x64);
\draw (x52) [color=3,line width=1.2,->] to (x61);

\draw (x60) [color=a,line width=1.2,->] to (x71);
\draw (x61) [color=3,line width=1.2,->] to (x74);
\draw (x62) [color=2,line width=1.2,->] to (x72);
\draw (x63) [color=E,line width=1.2,->] to (x70);
\draw (x64) [color=Celeste,line width=1.2,->] to (x73);}

\end{center}
\caption{An $R$-sequence formed after changing some arcs in Figure~\ref{Fig:Rseq sin gadget}.}\label{Fig:Rseq con gadget}
\end{figure}

\section{Proof of Theorem \ref{g/nrdoublestar,tau(n)}}\label{sectiong/nrdoublestar,tau(n)}

In this section we present the proof of Theorem \ref{g/nrdoublestar,tau(n)}. Let $Ng_1,\ldots,Ng_{|G/N|-1}$
be an $R^{**}$-sequence of $G/N$. Let $A'(Ng_i)=Ng_{i+1}$ and $A'(Ne)=Ne$. Let $A$ be the corresponding
permutation of the chosen representatives $g_0,\ldots,g_{|G/N|-1}$, with $Ng_0=Ne$  and $Ng_1Ng_3=Ng_3Ng_1=Ng_2$. 

Let $\tau(N)\leq |G/N|-3$, and let $T_{(N)},\theta_{1},\ldots,\theta_{|G/N|-3}$ be orthomorphisms of $N$
generating an $|N|$-cycle (obtainable because $\tau(N)-|G/N|$ is even, and we can add an even number of $T_{(N)}$ ).
Let $\rho=T_{(N)}\theta_{1}\ldots\theta_{|G/N|-3}$.
Let $\alpha_i=\theta^{(i-3)}$ for $3\leq i \leq |G/N|-1$ and $\alpha_i=T_{(N)}$ for $0\leq i \leq 2$.
Then $[A,\alpha]$ is an orthomorphism by Lemma \ref{a,alphaisortho}.
Notice that $[A,\alpha]$ acts as $\rho$ from $g_3$ to $g_{|G/N|-1}$. 

We first show that the construction uses the same generators for its arcs as $[A,\alpha]$,
\textbf{(Step 1)} proving that it is an orthomorphism; afterwards we will show that it generates a $(|G|-1)$-cycle,
\textbf{(Step 2)} proving that it is an $R$-sequence (here is where we need the $R^*$-condition); and \textbf{(Step 3)} that it satisfy the $R^{**}$-condition (here is where we need the $R^{**}$-condition).

We will need to look at a permutation of a group $f$ as the set of arcs $(g,f(g))$ in the complete Cayley
digraph $\overrightarrow{K}(G)$.

Let $E$ be the set of arcs between elements of $Ng_0,Ng_1,Ng_2,$ and $Ng_3$, i.e.,
\[
E=\left\lbrace g\in G \mid [A,\alpha](x)=xg \text{ with }x\in\{Ng_0,Ng_1,Ng_2\}\right\rbrace.
\]
As $[A,\alpha]$ is an orthomorphism, it is easy to see that
\[
E=N\cup N g_1^{-1}g_2\cup N g_2^{-1}g_3.
\]
Focus on the subgraph induced by $Ng_1$, $Ng_2$ and  $Ng_3$ (i.e., keep those vertices and the arcs between them). Notice that this subgraph is formed by vertex-disjoint paths of length 2.

As $T_{(N)}$ is an involution, we can 
partition $N$ into pairs of the form $\{n,T_{(N)}(n)\}$. Later we will give the partition more precisely, but for now it is enough to know that it is a partition.
Now, for each pair $n,T_{(N)} (n)$, $n\neq e$, replace the directed $2$-paths
\begin{align*}
\left(ng_1,T_{(N)}(n)g_2,ng_3\right),&&\left(T_{(N)} (n)g_1,ng_2,T_{(N)}(n)g_3\right),
\end{align*}
and the directed cycle
\[
\left(ne,T_{(N)}(n)e,ne\right)
\]
by the directed $3$-paths 
\[
\left(ng_1,T_{(N)} (n)e,ne,T_{(N)} (n)g_3\right)\ \text{and}\ \left(T_{(N)} (n)g_1,ng_2,T_{(N)} (n)g_2,ng_3\right).
\]

The directed 2-path $(g_1,g_2,g_3)$ is left unaltered. The new collection of directed paths is called the \textit{gadget} on columns $g_1,e,g_2,g_3$. Notice that each vertex of the form $mg_1$ is connected through a path in the gadget to $T_{(N)}(m)g_3$. In that sense, we say that the gadget acts on $N$ as $T_{(N)}$.
Let $\Phi:\{g_0,g_1,g_2\}\rightarrow \{g_1,g_2,g_3\}$ be the function obtained from this change, and let $\Psi:G\rightarrow G$ be the permutation defined by
\[
\Psi(ng_i)=\begin{cases}
\Phi(ng_i)&\text{if $i\in\{0,1,2\}$}\\
[A,\alpha](ng_i)&\text{otherwise.}
\end{cases}
\]
Our aim is to show that $\Phi$ induces and $R^{**}$-sequence. The first step is to show that $(ng_i)^{-1}\Phi(ng_i)\in E$ for $i\in \{0,1,2\}$.

We are going to prove the following.
\begin{lemma}
The arcs of the gadget use the same colors as the arcs of $[A,\alpha]$ on columns $g_1,g_0,g_2,g_3$. Furthermore,each path of the gadget acts on $N$  as $T_{(N)}$.
\end{lemma}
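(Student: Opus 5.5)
The plan is to compute every arc label $x^{-1}\Psi(x)$ explicitly, for the arcs of $[A,\alpha]$ leaving columns $g_1,g_0,g_2$ and for the arcs of the gadget. We then compare the two collections coset by coset of $N$. Throughout, write $T=T_{(N)}$ and $c(n)=n^{-1}T(n)$. Since $T$ is an orthomorphism, $c$ is a bijection of $N$, and since $T$ is an involution, $c(T(n))=T(n)^{-1}n=c(n)^{-1}$. By the $R^{**}$-condition on $G/N$ (Definition \ref{rdoublestar}), the subgroup generated by $Ng_1,Ng_3$ is cyclic. So we may fix the representatives so that $g_1g_3=g_3g_1=g_2$ holds in $G$, not merely modulo $N$. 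This identity is what lets us move labels between cosets.

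First, the old colours. Since $\alpha_0=\alpha_1=\alpha_2=T$, the arcs are $ng_1\to T(n)g_2$, $ng_2\to T(n)g_3$ and $ne\to T(n)e$, with labels $g_1^{-1}c(n)g_2$, $g_2^{-1}c(n)g_3$ and $c(n)$. Because $c$ is a bijection and $N$ is normal, these fill out $Ng_1^{-1}g_2=Ng_3$, $Ng_2^{-1}g_3=Ng_1^{-1}$ and $N$ exactly once each. This recovers $E=N\cup Ng_3\cup Ng_1^{-1}$.

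Next, the gadget colours for a pair $\{n,T(n)\}$:
\begin{align*}
&ng_1\to T(n)e:\ g_1^{-1}c(n), && T(n)e\to ne:\ c(n)^{-1}, && ne\to T(n)g_3:\ c(n)g_3,\\
&T(n)g_1\to ng_2:\ g_1^{-1}c(n)^{-1}g_2, && ng_2\to T(n)g_2:\ g_2^{-1}c(n)g_2, && T(n)g_2\to ng_3:\ g_2^{-1}c(n)^{-1}g_3.
\end{align*}
The untouched path $g_1\to g_2\to g_3$ contributes $g_1^{-1}g_2=g_3$ and $g_2^{-1}g_3=g_1^{-1}$. Using $g_2=g_1g_3=g_3g_1$, each gadget label lies in one of the three cosets. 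The labels $c(n)g_3$ and $g_1^{-1}c(n)^{-1}g_1g_3$ lie in $Ng_3$. The labels $c(n)^{-1}$ and $g_2^{-1}c(n)g_2$ lie in $N$. The labels $g_1^{-1}c(n)g_1\,g_1^{-1}$ and $g_3^{-1}g_1^{-1}c(n)^{-1}g_1g_3\,g_1^{-1}$ lie in $Ng_1^{-1}$.

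The count is right: each pair contributes two labels per coset, the pairs together with $e$ exhaust $N$, and $e$ contributes one label per coset. So the first claim of the lemma reduces to injectivity inside each coset. Concretely, as $n$ runs over a set $S$ of representatives of the $T$-pairs, we need
\[
\{c(n)\}\cup\{g_1^{-1}c(n)^{-1}g_1\},\qquad \{c(n)^{-1}\}\cup\{g_2^{-1}c(n)g_2\},\qquad \{g_1^{-1}c(n)g_1\}\cup\{g_2^{-1}c(n)^{-1}g_2\}
\]
each to be $N\setminus\{e\}$. I expect this to be the main obstacle, and it is where the ``more precise'' choice of partition has to enter. The freedom is in choosing which element of each pair is called $n$, and possibly in replacing $T$ by a conjugate-compatible involution. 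My first attempt would be to choose $S$ so that $c(S)$ is stable under conjugation by $g_1$ and $g_3$ and meets each pair $\{u,u^{-1}\}$ once. For example, take $S=c^{-1}(P)$ for a conjugation-invariant ``half'' $P$ of $N\setminus\{e\}$. Such a half exists because $|N|$ is odd, so no non-identity element is conjugate to its inverse via the odd-order group $\langle g_1,g_3\rangle$. With this choice each displayed union becomes $P\cup P^{-1}$.

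Finally, the second claim is a direct path-tracing check. The path starting at $ng_1$ ends at $T(n)g_3$. The path starting at $T(n)g_1$ ends at $ng_3=T(T(n))g_3$. The path starting at $g_1$ ends at $g_3=T(e)g_3$. Hence every gadget path sends $mg_1$ to $T(m)g_3$, that is, the gadget acts on $N$ as $T_{(N)}$.
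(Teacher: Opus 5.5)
Your proposal is correct and is essentially the paper's argument: the paper takes $R$ to be a union of orbits of the action $n\mapsto c^{-1}(g\,c(n)\,g^{-1})$ of $G$. So $c(R)$ is a union of $G$-conjugacy classes containing exactly one element of each pair $\{u,u^{-1}\}$, by the odd-order claim that $g^ing^{-i}\neq n^{-1}$; this is your $P$ with $\langle g_1,g_3\rangle$ replaced by all of $G$. Your coset-by-coset bookkeeping is, if anything, more careful: it records the label of the vertical arc $ng_2\to T_{(N)}(n)g_2$ correctly as $g_2^{-1}c(n)g_2$, rather than the paper's $n^{-1}T_{(N)}(n)$, and the conjugation-invariance of $P$ is exactly what absorbs this.
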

\begin{proof}
The result follows from the discussion preceding it.
\end{proof}
Arcs of the form $\left(T_{(N)} (n)e,ne\right)$ and arcs of the form $\left(ng_2,T_{(N)} (n)g_2\right)$ are called \textit{vertical} arcs. Arcs that are not vertical are called \textit{diagonal} arcs. 

Notice that the vertical arc $\left(T_{(N)}(n)e,ne\right)$ is generated by the group element $T_{(N)} (n)^{-1}n$, and the arc $\left(ng_2,T_{(N)} (n)g_2\right)$ is generated by the group element $n^{-1}T_{(N)}(n)$. As $n,T_{(N)}(n)$ run through all possible pairs, these use each element of $N$ as a generator for an arc exactly once.

Lets look at the diagonal arcs of the gadget now. Here we will actually make a choice on which element is $n$ and which element is $T_{(N)}(n)$ in our pairs. We want to find a subset $R\subset N$ such that $T_{(N)}(R)\cap R=\emptyset$ and $T_{(N)}(R)\cup R=N$. The way we want to set up is that, if in the gadget we use
\[
\left(ng_1,T_{(N)}(n)e,ne,T_{(N)}(n)g_3\right)\ \text{and}\ \left(T_{(N)}(n)g_1,ng_2,T_{(N)}(n)g_2,ng_3\right),
\]  
then $n\in R$. If we manage to find such an $R$, notice that $n\in R$ if and only if $T_{(N)}(n) \notin R$. Also if $n\in R$, then $C(n)=\left\lbrace  m\in G| \exists g\in G,\ g^{-1}m^{-1}T_{(N)}(m)g=n^{-1}T_{(N)} (n)\right\rbrace\in R$.  We need to show that it is possible to choose $R$ such that we use each element as an arc generator exactly once. We will do this by defining a group action on $N$ and taking orbits, in such a way that $C(n)$ and $T_{(N)}$ are disjoint orbits of the action. In order to properly define the action and prove that it satisfies what we need, 
the following property will be useful.  
\begin{Claim}\label{gng}
If $g$ and $n$ have odd order, then $g^ing^{-i}\neq n^{-1}$ for any $i$.
\end{Claim}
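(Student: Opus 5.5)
The claim has to be read with $n\neq e$, since for $n=e$ we trivially have $g^ing^{-i}=e=n^{-1}$; this is the case relevant to the gadget, where the pairs with $n=e$ are excluded. The plan is a parity argument: conjugating $n$ twice by the same element returns $n$, while conjugating an odd number of times returns $n^{-1}$. I argue by contradiction. Suppose $h:=g^i$ satisfies $hnh^{-1}=n^{-1}$. Since $g$ has odd order, so does $h$, because the order of $g^i$ divides the order of $g$. Write $k$ for the order of $h$; $k$ is odd.

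The first step is to iterate the relation. Since conjugation by $h$ is an automorphism, applying it to $hnh^{-1}=n^{-1}$ gives
\[
h^2nh^{-2}=h\,n^{-1}\,h^{-1}=\left(hnh^{-1}\right)^{-1}=n .
\]
By induction, $h^{j}nh^{-j}=n$ for every even $j$ and $h^{j}nh^{-j}=n^{-1}$ for every odd $j$.

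The second step is to specialise to $j=k$. Because $k$ is odd, the previous step gives $h^{k}nh^{-k}=n^{-1}$. On the other hand $h^k=e$, so $h^{k}nh^{-k}=n$. Hence $n=n^{-1}$, that is, $n^2=e$. The order of $n$ therefore divides both $2$ and an odd number, so it is $1$ and $n=e$. This contradicts $n\neq e$, which proves the claim.

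There is no genuine obstacle here. The only points needing care are the trivial exception $n=e$, and using that $h=g^i$, not just $g$, has odd order. In fact only the oddness of $|h|$ and of $|n|$ is used, which is automatic in the odd-order group $G$ of Theorem \ref{g/nrdoublestar,tau(n)}.
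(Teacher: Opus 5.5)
Your proof is correct and follows essentially the paper's argument: iterate the relation so that conjugation by $g^i$ alternates between $n$ and $n^{-1}$, then use an odd exponent that kills $g^i$ to force $n=n^{-1}$. Your explicit exclusion of $n=e$, which the paper's ``contradiction'' leaves implicit, is a welcome refinement. So is getting $g^in^{-1}g^{-i}=n$ from the automorphism property rather than by writing $n^{-1}=n^{2k}$.
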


\begin{proof}
Assume $g^ing^{-i}= n^{-1}$. As $n$ has odd order, $n^{-1}= n^{2k}$ for some $k$. Then 
\[
g^in^{-1}g^{-i}= g^in^{2k}g^{-i}=n^{-2k}=n.
\]  
Let $2j+1$ be the order of $g$. 

On the one hand,
\begin{align*}
(g^i)^{2j+1}n(g^{-i})^{2j+1}&=(g^i)^{2j}n^{-1}(g^{-i})^{2j}\\
&=(g^i)^{2j-1}n(g^{-i})^{2j-1}\\
&=(g^i)n(g^{-i})\\
&=n^{-1}.
\end{align*}
On the other hand, 
\[
(g^i)^{2j+1}n(g^{-i})^{2j+1}=ene=n.
\]
A contradiction! Therefore $g^ing^{-i}\neq n^{-1}$.
\end{proof}

As $T_{(N)}$ is an orthomorphism, the map $\psi:G\rightarrow G$ defined by $\psi(g)=g^{-1}T_{(N)}$ is a permutation. It is straightforward to see that $\rho_g=\psi^{-1}\left(g\psi(n)g^{-1}\right)$ defines a group action from $G$ on $G$. Let $C(n)$ be the orbit of $n$ under this action. We want to show that   $T_{(N)}(n)\not\in C(n)$ and that $T_{(N)}(C(n))=C(T_{(N)}(n))$. If $T_{(N)}(n)\in C(n)$, letting $x=n^{-1}T_{(N)}(n)$, we would have 
$x^{-1}=gxg^{-1}$, for some $g\in G$. This would contradict Claim \ref{gng}. Furthermore,  $m\in C(n)$ if and only if then there is some $g\in G$ such that
\[
g^{-1}m^{-1}T_{(N)}(m)g=n^{-1}T_{(N)}(n).
\]
Taking inverses on both sides, this implies
\begin{align*}
g^{-1}T_{(N)}(m)mg=&T_{(N)}(n)^{-1}n\\
g^{-1}T_{(N)}(m)T_{(N)}(T_{(N)}(m))g=&T_{(N)}(n)^{-1}T_{(N)}(T_{(N)}(n)).
\end{align*}
This means that $T_{(N)}(m)\in C(T_{(N)}(n))$. By a cardinality argument, this implies 
\[T_{(N)}(C(n))=C(T_{(N)}(n)).\] 
Thus, $D(n)=C(n)\cup C(T_{(N)}(n))$ defines an equivalence class for the action.

Let $n_1, \ldots, n_k$ be such that
\begin{equation*}
G=\bigcup_{i=1}^k D(n_i).
\end{equation*}
and such that 
\begin{equation*}
D(n_i)\neq D(n_j)\ \text{if}\ i\neq j
\end{equation*}
This implies $T_{(N)}(n_i)\neq n_j$ for each $1< i,j \leq k$. Let
\begin{equation*}
R=\bigcup_{i=1}^kC(n_i).
\end{equation*}
Notice that $T_{(N)}(R)\cap R=\emptyset$ and that $R\cup T_{(N)}(R)=G$.

Consider now the arcs in the gadget of the forms $(ne,T_{(N)}(n)g_3)$ and $(T_{(N)} (n)g_1, ng_2)$. The generators are of the forms $n^{-1}T_{(N)} (n)g_3$ and $g^{-1}_1T_{(N)}(n)^{-1}ng_2$. If $g_1g_3=g_2$, then $g_3=g_1^{-1}g_2$ and the generators are in the same coset. We want to be careful and make sure that they are distinct for any $n_1,n_2 \in R$. Assume that $n_1^{-1}T_{(N)} (n_1)g_1^{-1}g_2=g_1^{-1}T_{(N)} (n_2)^{-1}n_2g_2$.
\begin{align*}
n_1^{-1}T_{(N)} (n_1)g_1^{-1}g_2&=g_1^{-1}T_{(N)} (n_2)^{-1}n_2g_2\\
n_1^{-1}T_{(N)}(n_1)g_1^{-1}&=g_1^{-1}T_{(N)} (n_2)^{-1}n_2\\
n_1^{-1}T_{(N)}(n_1)&=g_1^{-1}T_{(N)}(n_2)^{-1}n_2g_1.
\end{align*}
Which means that, $T_{(N)}(n_2)\in C(n_1)\subseteq R$. But, as  $R\cap T_{(N)}(R)=\emptyset$, this is a contradiction.

The arcs that we have left to consider are of the forms $(ng_1, T_{(N)}(n)e)$ and $(T_{(N)}(n)g_2, ng_3)$. They are generated by arcs of the forms $g_1^{-1}n^{-1}T_{(N)}(n)$ and arcs of the forms $g_2^{-1}T_{(N)}(n)^{-1}ng_3$, respectively. If $g_3g_1=g_2$, then $g_1^{-1}=g_2^{-1}g_3$, the generators are in the same coset, and we want to make sure that they are distinct. Assume that $g_2^{-1}g_3n_1^{-1}T_{(N)}(n_1)=g_2^{-1}T_{(N)}(n_2)^{-1}n_2g_3$ 
\begin{align*}
g_2^{-1}g_3n_1^{-1}T_{(N)}(n_1)&=g_2^{-1}T_{(N)}(n_2)^{-1}n_2g_3\\
g_3n_1^{-1}T_{(N)}(n_1)&=T_{(N)}(n_2)^{-1}n_2g_3\\
n_1^{-1}T_{(N)}(n_1)&=g_3^{-1}T_{(N)}(n_2)^{-1}n_2g_3,
\end{align*}    
Which means that, $T_N(n_2)\in C(n_1)\subseteq R$, contradicting $R\cap T_{(N)}(R)=\emptyset$.

Further notice that after changing the arcs in columns $g_0$, $g_1$, $g_2$ and $g_3$ for the gadget, the resulting permutation acts on $N$ as $T_{(N)}$ from $g_0$ to $g_3$ and as $\theta^{i-3}$ from $g_i$ to $g_{i+1}$. Hence, it acts on $N$  as $\rho$. Thus,  starting at $eg_1$ we go through each element of $Ng_1$, which in turn implies that we go through all of $G\setminus{e}$.  Furthermore, the fact that the gadget uses the same arcs as $[A,\alpha]$ implies that it is a permutation. Thus it is an $R$-sequence. Finally, the directed $2$-path $(g_1,g_2,g_3)$ gives the $R^{**}$-condition. This completes the proof of Theorem \ref{g/nrdoublestar,tau(n)}.

\section{Conclusions}\label{sec: conclusions}
In this manuscript we presented a construction for $R$-sequences on odd ordered groups based on $R$-sequences and the existence of certain orthomorphisms. Combining this with known constructions we were able to show that groups of order coprime with $30$, and nilpotent groups of order coprime with $6$ and divisible by at least one prime other than $5$ are $R$-sequenceable. The obstacles in order to include other groups with our construction are as follows.
\begin{itemize}
    \item \textbf{Finding $\tau(G)$ for non-abelian $3$-groups.} Currently, it is known that $\tau(A)=2$ for every abelian $3$-group other than $\mathbb{Z}_3$. Finding this is the key to include groups that are not coprime with $3$ in the general result.
    \item \textbf{Finding $R^{**}$-sequences for non-abelian $5$-groups.} The current problem with groups of order a power of $5$ is that $\mathbb{Z}_5$ cannot be used as the quotient group $G/N$ for the construction, as it is not $R^{*}$-sequenceable. This, in part, excludes groups of order $5^k$. In order to include them, and to include certain solvable groups with particular quotient groups, an $R^{**}$-sequence for groups of order $5^k$ would be crucial.
\end{itemize}
On a separate note, the construction may be altered to attack solvable groups of even order. For this to work, one would need to use an $R$-sequence for the ``even'' part, and a set of permutations generating a cycle for the ``odd'' part. Here the issue is that the current construction asks for the $R$-sequence on the quotient group. For some groups a new construction changing the roles of $N$ and $G/N$ would be needed. There is hope that this new construction could work for non-solvable groups, assuming that their non-simple parts are $R$-sequenceable.
\bibliographystyle{acm}
\bibliography{bibliography.bib}

\end{document}